\documentclass{amsart}
\usepackage{amscd,amssymb}
\usepackage[all]{xy}
\begin{document}

\makeatletter
\def\revddots{\mathinner{\mkern1mu\raise\p@
  \vbox{\kern7\p@\hbox{.}}\mkern2mu
  \raise4\p@\hbox{.}\mkern2mu\raise7\p@\hbox{.}\mkern1mu}}
\makeatother
\newcommand{\Qed}{\hfill \mbox{$\blacksquare$}}

\newcommand{\lra}{\longrightarrow}
\newcommand{\lla}{\longleftarrow}
\newcommand{\Ra}{\Rightarrow}
\newcommand{\da}{\downarrow}
\newcommand{\ra}{\rightarrow}
\newcommand{\ovr}{\overrightarrow}

\newcommand{\kk}{\mathbb{K}}
\newcommand{\zz}{\mathbb{Z}}
\newcommand{\nn}{\mathbb{N}}

\renewcommand{\Im}{\operatorname{Im}\nolimits}
\newcommand{\Ker}{\operatorname{Ker}\nolimits}
\newcommand{\car}{\operatorname{char}\nolimits}
\newcommand{\Hom}{\operatorname{Hom}\nolimits}
\newcommand{\Ext}{\operatorname{Ext}\nolimits}
\newcommand{\HH}{\operatorname{HH}\nolimits}
\newcommand{\opp}{\operatorname{op}\nolimits}
\newcommand{\gr}{\operatorname{gr}\nolimits}
\newcommand{\ev}{\operatorname{ev}\nolimits}

\newcommand{\ot}{\otimes}
\newcommand{\lan}{\Lambda_N}
\renewcommand{\L}{\Lambda}
\newcommand{\oa}{\bar{a}}
\newcommand{\mo}{\mathfrak{o}}
\newcommand{\mt}{\mathfrak{t}}

\newcommand{\rrad}{\mathfrak{r}}
\newcommand{\rad}{\operatorname{rad}\nolimits}
\newcommand{\m}{\frak{m}}

\newcommand{\N}{\mathcal{N}}
\newcommand{\X}{\mathcal{X}}
\newcommand{\Y}{\mathcal{Y}}
\newcommand{\Z}{\mathbb{Z}}
\newcommand{\E}{\mathcal{E}}
\newcommand{\F}{\mathcal{F}}
\newcommand{\G}{\mathcal{G}}
\newcommand{\B}{\mathcal{B}}
\newcommand{\C}{\mathcal{C}}
\newcommand{\D}{\mathcal{D}}
\newcommand{\U}{\mathcal{U}}
\newcommand{\Pp}{\mathcal{P}}
\newcommand{\R}{\mathcal{R}}
\newcommand{\I}{\mathcal{I}}
\newcommand{\M}{\mathcal{M}}
\newcommand{\T}{\mathcal{T}}
\newcommand{\Q}{\mathcal{Q}}
\newcommand{\h}{\mathcal{H}}
\newcommand{\A}{\mathcal{A}}

\newtheorem{lem}{Lemma}[section]
\newtheorem{prop}[lem]{Proposition}
\newtheorem{cor}[lem]{Corollary}
\newtheorem{thm}[lem]{Theorem}
\newtheorem{bit}[lem]{}
\theoremstyle{definition}
\newtheorem{defin}[lem]{Definition}
\newtheorem*{remark}{Remark}
\newtheorem{example}[lem]{Example}

\setlength{\textheight}{21cm}

\title[The second Hochschild cohomology group \ldots]
{The second Hochschild cohomology group for one-parametric self-injective algebras}
\author[Al-Kadi]{Deena Al-Kadi}
\address{Deena Al-Kadi\\Department of Mathematics\\
Taif University\\ Saudi Arabia}
\email{dak12le@hotmail.co.uk}
\subjclass[2000]{16E40,16S80,16G60}
\keywords{Hochschild cohomology, self-injective algebras, socle deformation}

\begin{abstract}
In this paper we determine the second Hochschild cohomology group for a class of self-injective algebras of tame
representation type namely, those which are standard one-parametric but not weakly symmetric. These were classified up to derived equivalence by Bocian, Holm and Skowro\'nski in \cite{BHS}. We connect this to the deformation of these algebras.
\end{abstract}

\date{\today}
\maketitle

\section*{Introduction}
%The * means this section doesn't have a number
This paper determines the second Hochschild cohomology group for all standard one-parametric but not weakly symmetric self-injective algebras of tame representation type. Bocian, Holm and Skowro\'nski give, in \cite{BHS}, a classification of these algebras by quiver and relations up to derived equivalence. The algebras in \cite{BHS} are divided into two types, namely the algebra $\L = \L(p,q,k,s,\lambda)$ where $p,q,s,k$
are integers such that $p,q \geq 0, k \geq 2, 1 \leq s \leq k-1,\gcd(s,k) = 1, \gcd(s+2,k) = 1$ and $\lambda \in K\setminus\{0\}$ and the algebra $\L = \Gamma^*(n)$ where $n \geq 1$. Thus the second Hochschild cohomology group will be known for all the classes of the algebras given in \cite{BHS}. We remark that an algebra of the type $\L(p,q,k,s,\lambda)$ is never isomorphic to an algebra of the type $\Gamma^*(n)$ as their stable Auslander-Reiten quivers are not isomorphic. We refer the reader to \cite{BHS} which gives precise conditions for two algebras of the same type $\L(p,q,k,s,\lambda)$ or $\Gamma^*(n)$ to be isomorphic.

We start, in Section \ref{sec1}, by introducing the algebras $\L$, for both types, by quiver and relations. Section \ref{sec2} of this paper describes the projective resolution of \cite{GS} which we use to find ${\HH^2(\L)}$. In the third section, we determine ${\HH^2(\L)}$ for the algebra $\L = \L(p,q,k,s,\lambda)$, considering separately the cases $1 \leq s \leq k-2$ and $s = k-1$. The main result in this section is Theorem \ref{thm2}, which shows that ${\HH^2(\L)}$ has dimension 1 for $1 \leq s \leq k-1$. This group measures the infinitesimal deformations of the algebra $\L$; that is, if ${\HH}^2(\L) = 0$ then $\L$ has no non-trivial deformations, which is not the case here. We include, in Section \ref{sec3}, Theorem \ref{thm3} where we find a non-trivial deformation $\L_\eta$ of $\L$ associated to our non-zero element $\eta$ in ${\HH^2(\L)}$. This illustrates the connection between the second Hochschild cohomology group and deformation theory. In the final section, we determine ${\HH^2(\L)}$ for $\L = \Gamma^*(n)$. The main result in Section \ref{sec4} is Theorem \ref{theorem1} which shows that $\dim\,{\HH^2(\L)} = 2.$ The results we found in this paper are in contrast to the majority of self-injective algebras of finite
representation type (see \cite{D}). Since Hochschild cohomology is invariant under derived equivalence, the second Hochschild cohomology group is now known for the standard one-parametric but not weakly symmetric self-injective algebras of tame representation type which are derived equivalent to the algebra of the type $\L(p,q,k,s,\lambda)$ or $\Gamma^*(n)$.

\section{The one-parametric self-injective algebras}\label{sec1}
%There is no * so this section is numbered
In this chapter we describe the algebras of \cite{BHS}. We start with the algebra $\L = \L(p,q,k,s,\lambda)$. Let $K$ be an
algebraically closed field and let $p,q,s,k$
be integers such that $p,q \geq 0, k \geq 2, 1 \leq s \leq k-1,\gcd(s,k) =
1, \gcd(s+2,k) = 1$ and $\lambda \in K\setminus\{0\}$. From \cite[Section 5]{BHS}, $\L(p,q,k,s,\lambda)$
has quiver ${\mathcal Q}(p,q,k,s)$:

$$\xymatrix@R=10pt@C=15pt{
& & k\ar@{~>}[dl]\ar@{-->}@/^3pc/[dddddddrrrll] & 1\ar@{~>}[l]\ar@{-->}@/^4pc/[dddddddlllrr] & & \\
& {k-1}\ar@{~}[dl]\ar@{-->}@/^2pc/[dddddrrr] &  && 2\ar@{~>}[ul]\ar@{-->}@/^4pc/[lllddddd] & \\
\vdots\ar@{-->}@/_3pc/[urrrr] & & && & \vdots \ar@{~>}[ul]\\
\vdots \ar@{-->}@/_/[uuurrr]& & & && \vdots \\
\vdots\ar@{-->}@/_1pc/[uuuurrrl] & & && & \vdots \\
\raisebox{3ex}{\vdots} \ar@{~>}[dr]\ar@{-->}@/_1pc/[uuuur] & & & && \raisebox{3ex}{\vdots}\\
& s+3\ar@{~>}[dr]\ar@{-->}@/_1pc/[uuuul] & &&  s\ar@{~}[ur]\ar@{-->}@/_1pc/[ullll] & \\
& & s+2\ar@{~>}[r]\ar@{-->}@/_1pc/[uuuull] & s+1\ar@{~>}[ur]\ar@{-->}@/_1pc/[uuulll] & & \\
}$$

\vspace{1cm}

where, for any $i \in \{1,2, \ldots, k\}$,
$\xymatrix{
i  \ar@{~>}[rr]& &i-1
}$ denotes the path\\
$$\xymatrix{
\ar[r]^{\alpha_{(i,0)}}i & \ar[r]^{\alpha_{(i,1)}}   (i,1) &  (i,2) \ar[r]^{\alpha_{(i,2)}} & \cdots  \ar[r]^{\alpha_{(i,q-1)}}  & \ar[r]^{\alpha_{(i,q)}} (i,q) & i-1,\\
}$$

and $\xymatrix{
i-1\ar@{-->}[rr] && i+s
}$ denotes the path
$$\xymatrix{
\ar[r]^{\beta_{i^0}} i-1 & \ar[r]^{\beta_{i^1}} i^1 & \ar[r]^{\beta_{i^2}}i^2 & \cdots \ar[r]^{\beta_{i^{p-1}}} & \ar[r]^{\beta_{i^p}} i^p & i+s.
}$$\\

Then $\Lambda = K{\mathcal Q}(p,q,k,s) / I(p,q,k,s,\lambda)$ where $I(p,q,k,s,\lambda)$ is the ideal generated by the relations

$\bullet \beta_{i^p}\beta_{(s+i+1)^0}$, \hspace{1cm} for $i = 1, 2, \ldots, k$,

$\bullet \alpha_{(i,q)} \alpha_{(i-1,0)}$, \hspace{.9cm} for $i = 1, 2, \ldots, k$,

$\bullet \alpha_{(i,t')}\alpha_{(i,t'+1)} \cdots \alpha_{(i,q)} \beta_{i^0} \beta_{i^1} \cdots \beta_{i^p} \alpha_{(s+i,0)} \alpha_{(s+i,1)} \cdots \alpha_{(s+i,t')},$

\hspace{3.2cm} for $t' = 0, 1, \ldots, q$, $i = 1, 2, \ldots, k$,

$\bullet \beta_{i^j}\beta_{i^{j+1}} \cdots \beta_{i^p} \alpha_{(s+i,0)} \alpha_{(s+i,1)} \cdots \alpha_{(s+i,q)} \beta_{(s+i)^0}\beta_{(s+i)^1} \cdots \beta_{(s+i)^j},$

\hspace{3.2cm} for $j = 0, 1, \ldots, p$, $i = 1, 2, \ldots, k$,

$\bullet \alpha_{(i,0)} \alpha_{(i,1)} \cdots \alpha_{(i,q)} \beta_{i^0} \beta_{i^1} \cdots \beta_{i^p}$

\hspace{2cm}$- \beta_{(i+1)^0} \beta_{(i+1)^1} \cdots \beta_{(i+1)^p} \alpha_{(s+i+1,0)} \alpha_{(s+i+1,1)}\cdots \alpha_{(s+i+1,q)},$

\hspace{3.3cm}for $i = 2, \ldots, k$, and

$\bullet \alpha_{(1,0)} \alpha_{(1,1)} \cdots \alpha_{(1,q)} \beta_{1^0} \beta_{1^1} \cdots \beta_{1^p} - \lambda \beta_{2^0} \beta_{2^1} \cdots \beta_{2^p} \alpha_{(s+2,0)} \alpha_{(s+2,1)} \cdots \alpha_{(s+2,q)},$

\hspace{3.2cm} where $\lambda \in K\setminus\{0\}$.

\vspace{.5cm}

Next we describe the algebra $\L = \Gamma^*(n).$ For $n \geq 1$, $\Gamma^*(n)$ is given in \cite[Section 6]{BHS} by the quiver ${\mathcal Q}(n)$:

$$\xymatrix{
& & & & & & &\\
& & 4\ar[dl]_{\beta_4} & 3 \ar[l]_{\beta_3} &  &  & \\
&5 \ar@{}[dd]_{\vdots} & & & 2 \ar[ul]_{\beta_2}& & {n+1} \ar@<1.3 ex>[dl]^{\alpha_2}\\
&&&&& 1\ar[ul]_{\beta_1} \ar[ur]^{\alpha_1} \ar[dr]^{\gamma_1} & \\
& {n-3} \ar[dr]_{\beta_{n-3}} & & & n \ar[ur]_{\beta_n} & &  n+2 \ar@<1.3 ex>[ul]^{\gamma_2} \\
& & {n-2} \ar[r]_{\beta_{n-2}}  &{n-1}\ar[ur]_{\beta_{n-1}} & &&\\
}$$

\vspace{1cm}

Then $\Lambda = K{\mathcal Q}(n) / I(n)$ where $I(n)$ is the ideal generated by the relations:

(i) \hspace{3cm}$\alpha_1 \alpha_2 = (\beta_1 \beta_2 \cdots \beta_n)^2 = \gamma_1 \gamma_2,$

(ii)
$$ \beta_n\alpha_1= 0,  \hspace{1cm}
\beta_n \gamma_1 = 0,$$

$$\alpha_2 \beta_1= 0,  \hspace{1cm}
\gamma_2 \beta_1 = 0,$$

$$ \alpha_2 \alpha_1= 0,  \hspace{1cm}
\gamma_2 \gamma_1 = 0,$$

(iii)  for all $j\in \{2, \ldots, n\},$
$$\beta_j \beta_{j+1} \cdots  \beta_n \beta_1 \cdots \beta_n \beta_1 \cdots \beta_{j-1}\beta_j = 0.$$

\vspace{.5cm}
Note that we write our paths from left to right.

\bigskip

In order to compute $\HH^2(\L)$, the next
section gives the necessary background required to find the first terms of
the projective resolution of $\L$ as a $\L,\L$-bimodule.
Section \ref{sec3} and Section \ref{sec4} uses this part of a minimal
projective bimodule resolution for our algebras to determine the second Hochschild cohomology group and provides the main results
of this paper.

\bigskip
\section{projective resolutions} \label{sec2}
To find the second Hochschild cohomology group $\HH^2(\L)$, we could use the bar resolution given in \cite{H}. This bar resolution is not a minimal projective resolution of $\L$ as $\L, \L$-bimodule. In practice, it is easier to compute the Hochschild cohomology group if we use a minimal projective resolution. So here we use the
projective resolution of \cite{GS}. More generally, let $\L = K{\mathcal Q}/I$
be a finite dimensional algebra, where $K$ is an algebraically
closed field, ${\mathcal Q}$ is a quiver, and $I$ is an
admissible ideal of $K{\mathcal Q}$. Fix a minimal set $f^2$ of generators
for the ideal $I$. Let $x \in f^2$. Then $x =
\sum_{j=1}^{r} c_ja_{1j} \cdots a_{kj} \cdots a_{s_jj}$, that is, $x$ is a
linear combination of paths $a_{1j} \cdots a_{kj} \cdots a_{s_jj}$ for $j =
1, \ldots, r$ and $c_j \in K$ and there are unique vertices $v$ and $w$ such
that each path $a_{1j} \cdots a_{kj} \cdots a_{s_j j}$ starts at $v$ and
ends at $w$ for all $j$. We write $\mo(x) = v$ and $\mt(x) = w.$ Similarly
$\mo(a)$ is the origin of the arrow $a$ and $\mt(a)$ is the end of $a$.

In \cite[Theorem 2.9]{GS}, it is shown that there is a minimal
projective resolution of $\L$ as a $\L, \L$-bimodule which begins:
$$\cdots \rightarrow Q^3 \stackrel{A_3}{\rightarrow} Q^2 \stackrel{A_2}{\rightarrow} Q^1 \stackrel{A_1}{\rightarrow} Q^0 \stackrel{g}{\rightarrow} \L \rightarrow 0,$$
where the projective $\L, \L$-bimodules $Q^0, Q^1, Q^2$ are given by
$$Q^0 = \bigoplus_{v, vertex} \L v \otimes v\L,$$
$$\hspace*{1.5cm}Q^1 = \bigoplus_{a, arrow} \L \mo(a) \otimes \mt(a)\L, \mbox { and }$$
$$\hspace*{.5cm}Q^2 = \bigoplus_{x \in f^2} \L \mo(x) \otimes \mt(x) \L,$$
and the maps $g, A_1$, $A_2$ and $A_3$ are $\L, \L$-bimodule homomorphisms,
defined as follows. The map $g: Q^0 \rightarrow \L$ is the multiplication
map so is given by  $v \otimes v \mapsto v$. The map $A_1: Q^1\rightarrow Q^0$
is given by $\mo(a) \otimes \mt(a) \mapsto \mo(a) \otimes \mo(a) a - a \mt(a) \otimes \mt(a)$ for each arrow $a$.
With the notation for $x \in f^2$ given above, the map
$A_2: Q^2 \rightarrow Q^1$ is given by $\mo(x) \otimes \mt(x) \mapsto
\sum_{j=1}^{r}c_j(\sum_{k=1}^{s_j} a_{1j} \cdots a_{(k-1)j} \otimes
a_{(k+1)j} \cdots a_{s_j j})$, where $a_{1j} \cdots a_{(k-1)j} \otimes
a_{(k+1)j} \cdots a_{s_j j} \in \L \mo(a_{kj}) \otimes \mt(a_{kj})\L$.

In order to describe the projective bimodule $Q^3$ and the map $A_3$
in the $\L, \L$-bimodule resolution of $\L$ in \cite{GS}, we need to
introduce some notation from \cite{GSZ}. Recall that an
element $y \in K{\mathcal Q}$ is uniform if there are vertices $v, w$ such
that $y = v y = y w.$ We write $\mo(y) = v$ and $\mt(y) = w$. In \cite{GSZ}, Green,
Solberg and Zacharia show that there are sets $f^n$ in $K{\mathcal Q}$,
for $n \geq 3$, consisting of uniform
elements $y \in f^n$ such that $y = \sum_{x \in f^{n-1}} x r_x = \sum_{z \in
f^{n-2}} z s_z$ for unique elements $r_x, s_z \in K{\mathcal Q}$ such that
$s_z \in I$. These sets have
special properties related to a minimal projective $\L$-resolution of
$\L/\rrad$, where $\rrad$ is the Jacobson radical of $\L$. Specifically
the $n$-th projective in the minimal projective $\L$-resolution of $\L/\rrad$
is $\bigoplus_{y \in f^n} \mt(y) \L.$

In particular, to determine the set $f^3$, we follow explicitly the construction given in \cite[\S 1]{GSZ}. Let $f^1$ denote the set of arrows of ${\mathcal Q}$. Consider the intersection $(\bigoplus_i f^2_i K{\mathcal Q})\cap (\bigoplus_j f^1_jI)$. Set this intersection equal to some $(\bigoplus_l f^{3*}_l K{\mathcal Q})$. We then discard all elements of the form $f^{3*}$ that are in $\bigoplus_i f^2_iI$; the remaining ones form precisely the set $f^3$.

Thus, for $y \in f^3$ we have that $y \in (\bigoplus_i f^2_i K{\mathcal Q})\cap (\bigoplus_j f^1_jI)$.
So we may write $y = \sum f^2_i p_i = \sum q_i f^2_i r_i$ with $p_i, q_i, r_i \in
K{\mathcal Q}$, such that $p_i, q_i$ are in the ideal generated by the arrows of
$K{\mathcal Q}$, and $p_i$ unique. Then \cite{GS} gives that $Q^3 = \bigoplus_{y \in f^3} \L \mo(y) \otimes
\mt(y) \L$ and, for $y \in f^3$ in the notation
above, the component of $A_3 (\mo(y) \otimes \mt(y))$ in the summand $\L
\mo(f_i^2) \otimes \mt(f_i^2) \L$ of $Q^2$ is $\mo(y) \otimes p_i -
q_i \otimes r_i.$

Applying ${\Hom}(-, \L)$ to this part of a minimal projective bimodule
resolution of $\L$ gives us the complex
$$0 \rightarrow {\Hom}(Q^0, \L) \stackrel{d_1}{\rightarrow} {\Hom}(Q^1, \L) \stackrel{d_2}{\rightarrow} {\Hom}(Q^2, \L) \stackrel{d_3}{\rightarrow} {\Hom}(Q^3, \L)$$
where $d_i$ is the map induced from $A_i$ for $i = 1, 2, 3$. Then ${\HH}^2(\L) = {\Ker}\,d_3/{\Im}\,d_2.$

\bigskip

Throughout, all tensor products are tensor products over $K$, and we write
$\otimes$ for $\otimes_K$. When considering an element of the projective
$\L, \L$-bimodule $Q^1 = \bigoplus_{a, arrow} \L \mo(a) \otimes \mt(a) \L$
it is important to keep track of the individual summands of $Q^1$. So to
avoid confusion we usually denote an element in the summand $\L \mo(a)
\otimes \mt(a) \L$ by $\lambda \otimes_a \lambda'$ using the subscript `$a$'
to remind us in which summand this element lies. Similarly, an element
$\lambda \otimes_{f^2_i} \lambda'$ lies in the summand $\L \mo(f^2_i)
\otimes \mt(f^2_i) \L$ of $Q^2$  and an element $\lambda \otimes_{f^3_i}
\lambda'$ lies in the summand $\L \mo(f^3_i) \otimes \mt(f^3_i) \L$ of
$Q^3$. We keep this notation for the rest of the paper.

\bigskip

\section{${\HH}^2(\L)$ for $\L = \L(p,q,k,s,\lambda)$} \label{sec3}

We have given $\L = \L(p,q,k,s,\lambda)$ by quiver and relations in Section
\ref{sec1}. However, these relations are not minimal. So next we will find
a minimal set of relations $f^2$ for this algebra.

Let $$\begin{array}{lcl}
f^2_{1,1} &=& \alpha_{(1,0)} \alpha_{(1,1)} \cdots \alpha_{(1,q)} \beta_{1^0} \beta_{1^1} \cdots \beta_{1^p}\\
&&\hspace*{1cm} - \lambda \beta_{2^0} \beta_{2^1} \cdots \beta_{2^p} \alpha_{(s+2,0)} \alpha_{(s+2,1)} \cdots \alpha_{(s+2,q)},\\
f^2_{1,i} &=& \alpha_{(i,0)} \alpha_{(i,1)} \cdots \alpha_{(i,q)} \beta_{i^0} \beta_{i^1} \cdots \beta_{i^p} \\
&&\hspace*{1cm} - \beta_{(i+1)^0} \beta_{(i+1)^1} \cdots \beta_{(i+1)^p} \alpha_{(s+i+1,0)} \alpha_{(s+i+1,1)} \cdots \alpha_{(s+i+1,q)}\\
&&\hspace*{2.5cm} \mbox{for } i \in \{2, \ldots, k\},\\
f^2_{2,i} &=& \beta_{i^p} \beta_{(s+i+1)^0}\hspace*{.7cm} \mbox{for } i \in \{1, \ldots, k\},\\
f^2_{3,i} &=& \alpha_{(i,q)} \alpha_{(i-1,0)} \hspace*{.6cm} \mbox{for } i \in \{1, \ldots, k\},\\
f^2_{4,i,j} &=& \beta_{i^j} \beta_{i^{j+1}} \cdots \beta_{i^p} \alpha_{(s+i,0)} \alpha_{(s+i,1)} \cdots \alpha_{(s+i,q)} \beta_{(s+i)^0}\beta_{(s+i)^1} \cdots \beta_{(s+i)^j}\\
&&\hspace*{2.5cm} \mbox{where } j \in \{1, \ldots, p-1\} \mbox{ and } i \in \{1, \ldots, k\},\\
f^2_{5,i,t'} &=& \alpha_{(i,t')} \alpha_{(i,t'+1)} \cdots \alpha _{(i,q)} \beta_{i^0} \beta_{i^1} \cdots \beta_{i^p} \alpha_{(s+i,0)} \alpha_{(s+i,1)} \cdots \alpha_{(s+i,t')}\\
&&\hspace*{2.5cm} \mbox{where } t' \in \{1, \ldots, q-1\} \mbox{ and } i \in \{1, \ldots, k\}.\\
\end{array}$$

The remaining relations given in Section \ref{sec1} are all linear combinations of the above relations. For example, the relation
$\beta_{i^0} \beta_{i^1} \cdots \beta_{i^p} \alpha_{(s+i,0)} \alpha_{(s+i,1)} \cdots \alpha_{(s+i,q)} \beta_{(s+i)^0}$ can be written as
$\alpha_{(i-1,0)} \alpha_{(i-1,1)} \cdots \alpha_{(i-1,q)} \beta_{(i-1)^0}\beta_{(i-1)^1} \cdots \beta_{(i-1)^{p-1}} f^2_{2,i-1} - f^2_{1,i-1}$ $\beta_{(s+i)^0}.$
So this relation is in $I$ and is not in $f^2$.

\begin{prop}
For $\L = \L(p, q, k, s, \lambda)$ and with the above notation, the minimal set of relations is
$$f^2 = \{f^2_{1,i}, f^2_{2,i}, f^2_{3,i}, f^2_{4,i,j}, f^2_{5,i,t'}\}.$$
\end{prop}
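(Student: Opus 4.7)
The plan is to verify two facts: (i) the set $f^2$ generates $I$ as a two-sided ideal, and (ii) the images of the elements of $f^2$ are $K$-linearly independent in $I/(\rrad I + I\rrad)$, which is the standard criterion for a minimal set of relations.

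For (i), among the six families of relations listed in Section \ref{sec1}, the quadratic ones $\beta_{i^p}\beta_{(s+i+1)^0}$ and $\alpha_{(i,q)}\alpha_{(i-1,0)}$ already coincide with $f^2_{2,i}$ and $f^2_{3,i}$, and the binomial commutativity relations already coincide with $f^2_{1,i}$. Only the boundary values $j\in\{0,p\}$ in the $f^2_4$-family and $t'\in\{0,q\}$ in the $f^2_5$-family need to be exhibited as elements of the two-sided ideal generated by $f^2$. The example immediately preceding the proposition handles $j=0$ by combining $\alpha_{(i-1,0)}\cdots\beta_{(i-1)^{p-1}}\,f^2_{2,i-1}$ with $-f^2_{1,i-1}\beta_{(s+i)^0}$; three entirely analogous manipulations, obtained by reversing the order of factors or by swapping the roles of $\alpha$/$\beta$ and $f^2_{2,i}/f^2_{3,i}$, dispatch $j=p$, $t'=0$, and $t'=q$.

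For (ii), I would use the path-length grading on $K{\mathcal Q}$, with respect to which every element of $f^2$ is homogeneous: $f^2_{2,i}$ and $f^2_{3,i}$ sit in degree $2$, $f^2_{1,i}$ sits in degree $p+q+2$, and $f^2_{4,i,j}$, $f^2_{5,i,t'}$ sit in degree $p+q+3$. Since $\rrad I + I\rrad$ is spanned by products $\gamma r \delta$ with $r$ a generator and $\mathrm{length}(\gamma)+\mathrm{length}(\delta)\geq 1$, each such product has degree strictly greater than $\deg r$. Linear independence can then be checked degree-by-degree. In degree $2$ the paths $f^2_{2,i}$ and $f^2_{3,i}$ are pairwise distinct and hence independent. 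In degree $p+q+2$, each monomial summand of $f^2_{1,i}$ fails to contain either $\beta_{i'^p}\beta_{(s+i'+1)^0}$ or $\alpha_{(i',q)}\alpha_{(i'-1,0)}$ as a subpath, so neither monomial can arise from a product of a degree-$2$ generator with arrow paths, and the binomials $f^2_{1,i}$ are consequently independent modulo lower-degree material. In degree $p+q+3$ one must show that each interior monomial $f^2_{4,i,j}$ (respectively $f^2_{5,i,t'}$) does not appear in the $K$-span of degree-$(p+q+3)$ elements of $\rrad I + I\rrad$ coming from shorter generators.

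The main obstacle lies in this final check in degree $p+q+3$. Concretely, one must verify that no interior cyclic monomial $f^2_{4,i,j}$ with $1\leq j\leq p-1$ or $f^2_{5,i,t'}$ with $1\leq t'\leq q-1$ contains $\beta_{i'^p}\beta_{(s+i'+1)^0}$ or $\alpha_{(i',q)}\alpha_{(i'-1,0)}$ as an internal factor, and that it is not a monomial summand of any arrow-multiple of a binomial $f^2_{1,i'}$. Both verifications reduce to a short combinatorial inspection of ${\mathcal Q}(p,q,k,s)$: the interior ranges for $j$ and $t'$ are chosen precisely so that each such cyclic path begins strictly inside a segment of consecutive $\beta$'s (respectively $\alpha$'s), which blocks both possible sources of redundancy.
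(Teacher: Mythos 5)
Your proposal is correct, and it is in fact more complete than what the paper itself offers: the paper only addresses the generation half, exhibiting the single identity expressing the $j=0$ relation $\beta_{i^0}\cdots\beta_{i^p}\alpha_{(s+i,0)}\cdots\alpha_{(s+i,q)}\beta_{(s+i)^0}$ as $\alpha_{(i-1,0)}\cdots\beta_{(i-1)^{p-1}}f^2_{2,i-1}-f^2_{1,i-1}\beta_{(s+i)^0}$ and asserting that the remaining boundary relations ($j=p$, $t'=0$, $t'=q$) are handled analogously, after which minimality is simply stated. Your step (i) reproduces exactly this argument, while your step (ii) -- linear independence of the images of $f^2$ in $I/(\rrad I+I\rrad)$, checked via the path-length grading and the observation that no interior monomial $f^2_{4,i,j}$ or $f^2_{5,i,t'}$ contains a quadratic generator as a subpath or arises as an arrow-multiple of a binomial $f^2_{1,i'}$ -- supplies the justification of minimality that the paper leaves implicit; this is worth having, since the minimality of $f^2$ underpins the identification of $Q^2$ and the dimension counts for $\Hom(Q^2,\L)$ later in Section 3. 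One small point to tidy: when $p=q=0$ the degrees $2$ and $p+q+2$ coincide, so your phrase ``modulo lower-degree material'' degenerates; the independence in that degree still holds simply because the monomials of the binomials $f^2_{1,i}$ (of type $\alpha\beta$ and $\beta\alpha$) are distinct paths from the generators $f^2_{2,i}$ (type $\beta\beta$) and $f^2_{3,i}$ (type $\alpha\alpha$), and $(\rrad I+I\rrad)$ contributes nothing in degree $2$, so the argument survives with a one-line adjustment.
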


\bigskip

In contrast to the majority of self-injective algebras of finite
representation type, we will show that the algebra
$\L(p, q, k, s,\lambda)$ has non-zero second Hochschild cohomology group
(see \cite[Theorem 6.5]{D}). Recall that ${\HH}^2(\L) = {\Ker}\,d_3 /
{\Im}\,d_2$, where $d_3: {\Hom}(Q^2, \L) \rightarrow {\Hom}(Q^3, \L)$ is induced by
$A_3 : Q^3 \to Q^2$.

First we will find ${\Im}\,d_2$. Since $d_2: {\Hom}(Q^1, \L) \rightarrow {\Hom}(Q^2, \L)$, let $f \in {\Hom}(Q^1, \L)$ so that $d_2f = fA_2$. We consider the cases $1 \leq s \leq k-2$ and $s = k-1$ separately.
\vspace*{.5cm}

Let $1 \leq s \leq k-2$ and

\hspace*{1.5cm}$f(e_i \otimes_{\beta_{(i+1)^0}} e_{(i+1)^1}) = c_{1,i} \beta_{(i+1)^0},$

\hspace*{1.5cm}$f(e_{(i+1)^j} \otimes_{\beta_{(i+1)^j}} e_{(i+1)^{j+1}}) = c_{2,i+1,j} \beta_{(i+1)^j} \mbox{ for }j \in \{1, \ldots, p-1\},$

\hspace*{1.5cm}$f(e_{(i+1)^p} \otimes_{\beta_{(i+1)^p}} e_{s+i+1}) = c_{2,i+1,p} \beta_{(i+1)^p},$

\hspace*{1.5cm}$f(e_i \otimes_{\alpha_{(i,0)}} e_{(i,1)}) = c_{3,i} \alpha_{(i,0)},$

\hspace*{1.5cm}$f(e_{(i,t')} \otimes_{\alpha_{(i,t')}} e_{(i,t'+1)}) = c_{4,i,t'} \alpha_{(i,t')} \mbox{ for } t' \in \{1, \ldots, q-1\} \mbox {  and }$

\hspace*{1.5cm}$f(e_{(i,q)} \otimes_{\alpha_{(i,q)}} e_{i-1}) = c_{4,i,q} \alpha_{(i,q)},$\\
where all coefficients $c_{1,i}, c_{2,i+1,j} \mbox{ for } j \in \{1, \ldots, p-1\}, c_{2,i+1,p}, c_{3,i}, c_{4,i,t'} \mbox{ for } t' \in \{1, \ldots, q-1\}, c_{4,i,q}$ $\in K.$ Now we find $fA_2$.

First we have, $fA_2(e_1 \otimes_{f^2_{1,1}} e_{s+1})$
$$\begin{array}{lcl}
&=& f(e_1 \otimes_{\alpha_{(1,0)}} e_{(1,1)}) \alpha_{(1,1)} \cdots \alpha_{(1,q)} \beta_{1^0} \beta_{1^1} \cdots \beta_{1^p} \\
& &+ \alpha_{(1,0)} f(e_{(1,1)} \otimes_{\alpha_{(1,1)}} e_{(1,2)}) \alpha_{(1,2)} \cdots \alpha_{(1,q)} \beta_{1^0} \beta_{1^1} \cdots \beta_{1^p} \\
&&+ \cdots +  \alpha_{(1,0)} \alpha_{(1,1)} \cdots \alpha_{(1,q-1)} f(e_{(1,q)} \otimes_{\alpha_{(1,q)}} e_{k}) \beta_{1^0} \beta_{1^1} \cdots \beta_{1^p}\\
& &+ \alpha_{(1,0)} \alpha_{(1,1)} \cdots \alpha_{(1,q)} f(e_{k} \otimes_{\beta_{1^0}} e_{1^1}) \beta_{1^1} \cdots \beta_{1^p}\\
& &+ \alpha_{(1,0)} \alpha_{(1,1)} \cdots \alpha_{(1,q)} \beta_{1^0} f(e_{1^1} \otimes_{\beta_{1^1}} e_{1^2}) \beta_{1^2} \cdots \beta_{1^p}\\
& &+ \cdots + \alpha_{(1,0)} \alpha_{(1,1)} \cdots \alpha_{(1,q)} \beta_{1^0} \beta_{1^1} \cdots \beta_{1^{p-1}} f(e_{1^p} \otimes_{\beta_{1^p}} e_{s+1})\\
& &- \lambda [f(e_1 \otimes_{\beta_{2^0}} e_{2^1}) \beta_{2^1} \cdots \beta_{2^p} \alpha_{(s+2,0)} \alpha_{(s+2,1)} \cdots \alpha_{(s+2,q)} \\
& &+ \beta_{2^0} f(e_{2^1} \otimes_{\beta_{2^1}} e_{2^2}) \beta_{2^2} \cdots \beta_{2^p} \alpha_{(s+2,0)} \alpha_{(s+2,1)} \cdots \alpha_{(s+2,q)}\\
& &+ \cdots + \beta_{2^0} \beta_{2^1} \cdots \beta_{2^{p-1}} f(e_{2^p} \otimes_{\beta_{2^p}} e_{s+2}) \alpha_{(s+2,0)} \alpha_{(s+2,1)} \cdots \alpha_{(s+2,q)}\\
& &+  \beta_{2^0} \beta_{2^1} \cdots \beta_{2^p} f(e_{s+2} \otimes_{\alpha_{(s+2,0)}} e_{(s+2,1)}) \alpha_{(s+2,1)} \cdots \alpha_{(s+2,q)}\\
& &+  \beta_{2^0} \beta_{2^1} \cdots \beta_{2^p} \alpha_{(s+2,0)} f(e_{(s+2,1)} \otimes_{\alpha_{(s+2,1)}} e_{(s+2,2)}) \alpha_{(s+2,2)} \cdots \alpha_{(s+2,q)}\\
& &+ \cdots +  \beta_{2^0} \beta_{2^1} \cdots \beta_{2^p} \alpha_{(s+2,0)} \alpha_{(s+2,1)} \cdots \alpha_{(s+2,q-1)} f(e_{(s+2,q)} \otimes_{\alpha_{(s+2,q)}} e_{s+1})]\\
&=& (c_{3,1} + c_{4,1,1} + \cdots + c_{4,1,q} + c_{1,k} + c_{2,1,1} + \cdots + c_{2,1,p})
\alpha_{(1,0)} \alpha_{(1,1)} \cdots \alpha_{(1,q)} \beta_{1^0}\\
&& \beta_{1^1} \cdots \beta_{1^p}\\
&& -\lambda (c_{1,1} + c_{2,2,1} + \cdots + c_{2,2,p} + c_{3,s+2} + c_{4,s+2,1} + \cdots + c_{4,s+2,q})
\beta_{2^0} \beta_{2^1} \cdots \beta_{2^p}\\
&&\alpha_{(s+2,0)}\alpha_{(s+2,1)} \cdots \alpha_{(s+2,q)}\\
&=& (c_{3,1} + c_{4,1,1} + \cdots + c_{4,1,q} + c_{1,k} + c_{2,1,1} + \cdots + c_{2,1,p} - c_{1,1} - c_{2,2,1} - \cdots -\\
&& c_{2,2,p} - c_{3,s+2}
 - c_{4,s+2,1} - \cdots - c_{4,s+2,q})\alpha_{(1,0)} \alpha_{(1,1)} \cdots \alpha_{(1,q)} \beta_{1^0} \beta_{1^1} \cdots \beta_{1^p}.
\end{array}$$

Similarly for $i \in \{2, \ldots, k\}$, $fA_2(e_i \otimes_{f^2_{1,i}} e_{s+i})$
$= (c_{3,i} + c_{4,i,1} + \cdots + c_{4,i,q} + c_{1,i-1} + c_{2,i,1} + \cdots + c_{2,i,p} - c_{1,i} - c_{2,i+1,1} - \cdots - c_{2,i+1,p} - c_{3,s+i+1} - c_{4,s+i+1,1} - \cdots - c_{4,s+i+1,q})
\alpha_{(i,0)} \alpha_{(i,1)} \cdots \alpha_{(i,q)} \beta_{i^0} \beta_{i^1} \cdots \beta_{i^p}.$

For the remaining terms, $fA_2(\mo(x) \otimes_{x} \mt(x)) = 0$ where  $x \in \{f^2_{2,i}, f^2_{3,i},f^2_{4,i,j},f^2_{5,i,t'}\}$
for all $i \in \{1, \ldots, k\}$, $j \in \{1, \ldots, p-1\}$ and $t' \in \{1, \ldots, q-1\}$.

\vspace{.5cm}

Let $c_i' = c_{3,i} + c_{4,i,1} + \cdots + c_{4,i,q} + c_{1,i-1} + c_{2,i,1} + \cdots + c_{2,i,p} - c_{1,i} - c_{2,i+1,1} -
\cdots - c_{2,i+1,p} - c_{3,s+i+1} - c_{4,s+i+1,1} - \cdots - c_{4,s+i+1,q}$ for $i = 1, \ldots, k$\\

and $\rho_i = \alpha_{(i,0)} \alpha_{(i,1)} \cdots \alpha_{(i,q)} \beta_{i^0} \beta_{i^1} \cdots \beta_{i^p}$ for $i = 1, \ldots, k.$\\

Thus for $i \in \{1, \ldots, k\}$ and $1 \leq s \leq k-2$, $fA_2$ is given by

\hspace*{2cm}$fA_2(e_i \otimes_{f^2_{1,i}} e_{s+i}) = c_i' \rho_i,$

\hspace*{2cm}$fA_2(e_{i^p} \otimes_{f^2_{2,i}} e_{(s+i+1)^1}) = 0,$

\hspace*{2cm}$fA_2(e_{(i,q)} \otimes_{f^2_{3,i}} e_{(i-1,1)})  = 0,$

\hspace*{2cm}$fA_2(e_{i^j} \otimes_{f^2_{4,i,j}} e_{(s+i)^{j+1}}) = 0 \mbox{ where } j \in  \{1, \ldots, p-1\} \mbox { and }$

\hspace*{2cm}$fA_2(e_{(i,t')} \otimes_{f^2_{5,i,t'}} e_{(s+i,t'+1)}) = 0  \mbox{ where } t' \in \{1, \ldots, q-1\},$\\
where $c_1', \ldots, c_k' \in K$ with $\Sigma_{i=1}^k c_i' = 0$. So ${\dim}\,{\Im}\,d_2 = k - 1.$

\vspace*{.5cm}

For $s =k-1$, we let

\hspace*{1.5cm}$f(e_i \otimes_{\beta_{(i+1)^0}} e_{(i+1)^1})  =  c_{1,i} \beta_{(i+1)^0},$

\hspace*{1.5cm}$f(e_{(i+1)^j} \otimes_{\beta_{(i+1)^j}} e_{(i+1)^{j+1}})  =  c_{2,i+1,j} \beta_{(i+1)^j} \mbox{ for }j \in \{1, \ldots, p-1\},$

\hspace*{1.5cm}$f(e_{(i+1)^p} \otimes_{\beta_{(i+1)^p}} e_i)  =  c_{2,i+1,p} \beta_{(i+1)^p},$

\hspace*{1.5cm}$f(e_i \otimes_{\alpha_{(i,0)}} e_{(i,1)})  =  c_{3,i} \alpha_{(i,0)},$

\hspace*{1.5cm}$f(e_{(i,t')} \otimes_{\alpha_{(i,t')}} e_{(i,t'+1)})  = c_{4,i,t'} \alpha_{(i,t')} \mbox{ for } t' \in \{1, \ldots, q-1\} \mbox {  and }$

\hspace*{1.5cm}$f(e_{(i,q)} \otimes_{\alpha_{(i,q)}} e_{i-1})  =  c_{4,i,q} \alpha_{(i,q)} + d_{1,i} \alpha_{(i,q)}\beta_{i^0} \beta_{i^1} \cdots \beta_{i^p},$\\
where for all $i \in \{1,\ldots, k\}$ the coefficients $c_{1,i}, c_{2,i+1,j} \mbox{ for } j \in \{1, \ldots, p-1\}, c_{2,i+1,p},$ $c_{3,i}, c_{4,i,t'} \mbox{ for } t' \in \{1, \ldots, q-1\}, c_{4,i,q},$ $d_{1,i}$ are in $K.$

\vspace*{.3cm}

Then we can find $fA_2$ for $i \in \{1, \ldots, k\}$ in the same way as the previous case to see that it is given by

\hspace*{1cm}$fA_2(e_i \otimes_{f^2_{1,i}} e_{i-1}) = c_i' \rho_i \mbox{ where } c_i', \rho_i \mbox{ as above },$

\hspace*{1cm}$fA_2(e_{i^p} \otimes_{f^2_{2,i}} e_{i^1}) = 0,$

\hspace*{1cm}$fA_2(e_{(i,q)} \otimes_{f^2_{3,i}} e_{(i-1,1)})  = d_{1,i} \alpha_{(i,q)}\beta_{i^0} \beta_{i^1} \cdots \beta_{i^p} \alpha_{(i-1,0)},$

\hspace*{1cm}$fA_2(e_{i^j} \otimes_{f^2_{4,i,j}} e_{(i-1)^{j+1}}) = 0 \mbox{ where } j \in  \{1, \ldots, p-1\} \mbox { and }$

\hspace*{1cm}$fA_2(e_{(i,t')} \otimes_{f^2_{5,i,t'}} e_{(i-1,t'+1)}) = 0  \mbox{ where } t' \in \{1, \ldots, q-1\},$\\
where $c_1', \ldots, c_k',d_{1,1}, \ldots, d_{1,k} \in K$ with $\Sigma_{i=1}^k c_i' = 0$. Note that there is no dependency between the $d_{1,i}.$ So ${\dim}\,{\Im}\,d_2 = 2k - 1.$

\begin{prop} \label{pro2}
If $1 \leq s \leq k-2$, we have ${\dim}\,{\Im}\,d_2 = k - 1.$
If $s = k-1$, we have ${\dim}\,{\Im}\,d_2 = 2k -1.$
\end{prop}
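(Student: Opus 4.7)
The plan is to read the two dimension counts directly off the explicit computation of $fA_2$ just performed for a general $f \in \Hom(Q^1, \L)$; all the substantive linear algebra is already present in that calculation. The task reduces to (i) identifying the linear relations forced on the coefficients parametrising $\Im\,d_2$, and (ii) exhibiting enough elements of $\Im\,d_2$ to see that these are the only relations.

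For $1 \leq s \leq k-2$, $fA_2$ is supported only on the summands of $\Hom(Q^2, \L)$ indexed by $f^2_{1,i}$, where it takes the form $c_i' \rho_i$. So $\Im\,d_2$ is contained in the $k$-dimensional space spanned by $\rho_1, \ldots, \rho_k$. First I would verify the linear relation $\sum_{i=1}^k c_i' = 0$ by direct computation: summing the definition of $c_i'$ over $i$, each type of input coefficient ($c_{1,*}$, $c_{2,*,*}$, $c_{3,*}$, $c_{4,*,*}$) occurs once with a plus sign and once with a minus sign after reindexing the cyclic shifts $i \mapsto i-1$ and $i \mapsto s+i+1$ modulo $k$, so the total telescopes to zero. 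This yields $\dim\,\Im\,d_2 \leq k - 1$. For the matching lower bound, for each $i \in \{1, \ldots, k-1\}$ I would take $f$ with $c_{1,i} = 1$ and all other coefficients zero; inspection of the formula shows the resulting tuple has $c_i' = -1$, $c_{i+1}' = 1$, and all other $c_j' = 0$, and these $k-1$ tuples are visibly linearly independent in the hyperplane $\sum c_j' = 0$.

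For $s = k-1$ the analysis of the $c_i'\rho_i$ components is identical and contributes $k-1$ dimensions. The new ingredient is that $fA_2$ now has a nonzero component on the $f^2_{3,i}$ summand, namely $d_{1,i}\,\alpha_{(i,q)}\beta_{i^0}\beta_{i^1}\cdots\beta_{i^p}\alpha_{(i-1,0)}$, with the $d_{1,i}$ entirely unconstrained by any relation. These $k$ contributions live in the $f^2_{3,i}$ summands of $\Hom(Q^2, \L)$, which are disjoint from the $f^2_{1,i}$ summands carrying the $\rho_i$ contributions, so they add $k$ further independent dimensions for a total of $(k-1) + k = 2k - 1$.

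The only point requiring extra care — and in my view the main (mild) obstacle — is the independence assertion in the second case: I must confirm that each $\alpha_{(i,q)}\beta_{i^0}\cdots\beta_{i^p}\alpha_{(i-1,0)}$ is actually nonzero in $\L$ so that the $d_{1,i}$ really give $k$ independent directions in $\Im\,d_2$. This is settled by inspecting the minimal relations: although $\alpha_{(i,q)}\alpha_{(i-1,0)} = f^2_{3,i}$ kills the short composition, when $s = k-1$ the inserted loop $\beta_{i^0}\cdots\beta_{i^p}$ does return to $i-1$ (since $i+s \equiv i-1$ mod $k$), and no element of $f^2$ expresses $\alpha_{(i,q)}\beta_{i^0}\cdots\beta_{i^p}\alpha_{(i-1,0)}$ as a combination of shorter relations, so the path survives as a nonzero element of $e_{(i,q)}\L e_{(i-1,1)}$. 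Both dimension formulas then follow.
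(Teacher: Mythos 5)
Your argument is correct and follows essentially the same route as the paper: both read $\dim\Im d_2$ directly off the computation of $fA_2$, using the single relation $\sum_{i=1}^k c_i' = 0$ on the $f^2_{1,i}$-components (for all $s$) and the $k$ unconstrained coefficients $d_{1,i}$ on the $f^2_{3,i}$-components when $s=k-1$. You merely make explicit two points the paper leaves implicit — that the hyperplane $\sum c_i'=0$ is fully realized, and that the paths $\alpha_{(i,q)}\beta_{i^0}\cdots\beta_{i^p}\alpha_{(i-1,0)}$ are nonzero in $\L$ — both of which check out.
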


\vspace*{.3cm}
Next we find ${\Hom(Q^2,\L)}$ and again consider the two cases separately.
Let $1 \leq s \leq k-2$ and $h \in {\Hom}(Q^2, \L)$. Then $h$ is defined by
$$\begin{array}{rcl}
\mo(f^2_{1,i}) \otimes \mt(f^2_{1,i}) & \mapsto & d_i \alpha_{(i,0)} \alpha_{(i,1)} \cdots \alpha_{(i,q)} \beta_{i^0} \beta_{i^1} \cdots \beta_{i^p}  \mbox{   for } i \in \{1,2, \ldots, k\},\\
\mbox{else } & \mapsto & 0,\\
\end{array}$$ where $d_i \in K$.

Therefore ${\dim}\,{\Hom(Q^2,\L)} = k.$ Hence, ${\dim}\,{\Ker}\,d_3 \leq k.$

\vspace*{.5cm}
For $s= k-1$ and $i \in \{1,2, \ldots, k\}$, $h$ is given by
$$\begin{array}{rcl}
\mo(f^2_{1,i}) \otimes \mt(f^2_{1,i}) & \mapsto & d_i \alpha_{(i,0)} \alpha_{(i,1)} \cdots \alpha_{(i,q)} \beta_{i^0} \beta_{i^1} \cdots \beta_{i^p} + d_i' \alpha_{(i,0)} \alpha_{(i,1)} \cdots \alpha_{(i,q)},\\
\mo(f^2_{3,i}) \otimes \mt(f^2_{3,i}) & \mapsto & d_i''  \alpha_{(i,q)} \beta_{i^0} \beta_{i^1} \cdots \beta_{i^p} \alpha_{(i-1,0)},\\
\mbox{else } & \mapsto & 0,\\
\end{array}$$ where $d_i, d_i', d_i''$ are in $K$ for $i \in \{1,\ldots,k\}.$ Thus ${\dim}\,{\Hom(Q^2,\L)} = 3k.$

\begin{prop}\label{bb1}
If $1 \leq s \leq k-2$, we have ${\dim}\,{\Hom(Q^2,\L)} = k.$
If $s = k-1$, ${\dim}\,{\Hom(Q^2,\L)} = 3k.$
\end{prop}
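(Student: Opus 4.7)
The plan is to recognize that $Q^2 = \bigoplus_{x\in f^2}\L\mo(x)\otimes\mt(x)\L$ is a direct sum of indecomposable projective $\L,\L$-bimodules, so a bimodule homomorphism $h: Q^2 \to \L$ is determined by the images $h(\mo(x)\otimes\mt(x))$, each of which must lie in $\mo(x)\L\mt(x)$, and any such assignment extends uniquely. Hence
\[\dim_K \Hom(Q^2,\L) = \sum_{x \in f^2} \dim_K \mo(x)\L\mt(x),\]
and the proposition reduces to computing $\dim_K \mo(x)\L\mt(x)$, i.e.\ counting paths from $\mo(x)$ to $\mt(x)$ in $\L$, for each generator $x \in f^2$.

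I would first enumerate paths in $K\Q(p,q,k,s)$ from $\mo(x)$ to $\mt(x)$, exploiting the fact that every vertex except $1,\ldots,k$ has a unique outgoing arrow, and at each vertex $i \in \{1,\ldots,k\}$ only $\alpha_{(i,0)}$ and $\beta_{(i+1)^0}$ leave. This forces a very short list of candidate paths. I then reduce modulo $I$ using the minimal relations $f^2_{1,i},\ldots,f^2_{5,i,t'}$ together with their consequences, such as $\rho_i\alpha_{(s+i,0)} \in I$ (derived from $f^2_{1,i}$ and $f^2_{3,s+i+1}$).

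For $1 \leq s \leq k-2$, only $f^2_{1,i}$ contributes: the two candidate paths from $i$ to $s+i$, namely $\rho_i$ and $\beta_{(i+1)^0}\cdots\beta_{(i+1)^p}\alpha_{(s+i+1,0)}\cdots\alpha_{(s+i+1,q)}$, are proportional modulo $f^2_{1,i}$, giving the one-dimensional space $K\rho_i$, and any continuation past $s+i$ lies in $I$. For the remaining types, every path from source to target either equals the defining relation itself or is annihilated by one of the monomial relations (crucially using $s+i+1 \neq i$), producing zero contributions. Summing gives $k$.

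For $s = k-1$, the congruence $s+i \equiv i-1 \pmod{k}$ merges vertices and opens new paths. For $f^2_{1,i}$, the shorter path $\alpha_{(i,0)}\cdots\alpha_{(i,q)}$ now lands at $i-1 = s+i$, giving a second basis element alongside $\rho_i$; for $f^2_{3,i}$, the path $\alpha_{(i,q)}\beta_{i^0}\cdots\beta_{i^p}\alpha_{(i-1,0)}$ is now a genuine path (since $\beta_{i^p}$ ends at $s+i = i-1$) and I would check it is not in $I$, contributing one extra basis element. A parallel analysis shows that $f^2_{2,i}, f^2_{4,i,j}, f^2_{5,i,t'}$ still contribute zero. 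The total is $2k+k = 3k$. The main obstacle is exactly this case-by-case verification: in the $s=k-1$ situation the vertex collapses make it easy to overlook a surviving path, so one must argue carefully that $\alpha_{(i,0)}\cdots\alpha_{(i,q)}$ and $\rho_i$ are linearly independent (clear by length) and that $\alpha_{(i,q)}\beta_{i^0}\cdots\beta_{i^p}\alpha_{(i-1,0)}$ cannot be reduced to zero using the relations in $I$.
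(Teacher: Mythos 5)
Your proposal is correct and follows essentially the same route as the paper: a bimodule map $h:Q^2\to\L$ is determined by the elements $h(\mo(x)\otimes\mt(x))\in\mo(x)\L\mt(x)$ for $x\in f^2$, and the dimension count reduces to listing the nonzero paths from $\mo(x)$ to $\mt(x)$ in $\L$, giving $k\cdot 1$ in the case $1\le s\le k-2$ and $k\cdot 2+k\cdot 1$ (from $f^2_{1,i}$ and $f^2_{3,i}$) in the case $s=k-1$. The paper simply records the resulting general form of $h$; your path-by-path verification, including the observation that $s+i\equiv i-1$ creates the extra elements $\alpha_{(i,0)}\cdots\alpha_{(i,q)}$ and $\alpha_{(i,q)}\beta_{i^0}\cdots\beta_{i^p}\alpha_{(i-1,0)}$ when $s=k-1$, is exactly the computation underlying it.
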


\begin{cor} \label{cor1}
If $1 \leq s \leq k-2$, we have ${\dim}\,{\Ker}\,d_3 \leq k.$
If $s = k-1$, ${\dim}\,{\Ker}\,d_3 \leq 3k.$
\end{cor}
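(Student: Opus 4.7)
The proof proposal is immediate from the previous proposition. Since $d_3$ is a $K$-linear map with domain $\Hom(Q^2,\L)$, the kernel $\Ker\,d_3$ is a $K$-subspace of $\Hom(Q^2,\L)$, and therefore
\[
\dim\,\Ker\,d_3 \;\leq\; \dim\,\Hom(Q^2,\L).
\]
So the plan is simply to quote Proposition \ref{bb1}, which gives $\dim\,\Hom(Q^2,\L) = k$ in the case $1 \leq s \leq k-2$ and $\dim\,\Hom(Q^2,\L) = 3k$ in the case $s=k-1$, and then substitute these values into the displayed inequality above to obtain the two claimed bounds.

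There is no real obstacle here; the corollary is a direct consequence of the fact that a subspace has dimension at most that of its ambient space, combined with the explicit dimension count already established for $\Hom(Q^2,\L)$ in each of the two cases $1 \leq s \leq k-2$ and $s=k-1$. The substantive work of computing $\dim\,\Hom(Q^2,\L)$ has already been carried out by analysing which bimodule maps $h : Q^2 \to \L$ can be non-zero, using that the image of each generator $\mo(f^2_{*,*}) \otimes \mt(f^2_{*,*})$ must lie in the appropriate corner $\mo(f^2_{*,*})\L\mt(f^2_{*,*})$ and then enumerating the possible basis paths in each such corner of $\L$ for the two cases.
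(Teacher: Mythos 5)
Your proposal is correct and matches the paper's argument: the corollary is deduced exactly as you say, by noting that $\Ker\,d_3$ is a subspace of $\Hom(Q^2,\L)$ and quoting the dimension count of Proposition \ref{bb1} in each of the two cases.
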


\vspace*{.3cm}

In order to find ${\Ker}\,d_3$ and hence determine ${\HH}^2(\L)$ we start by giving a non-zero element in ${\HH}^2(\L)$ for all $s$.

\begin{prop} \label{b1}
Define $h_1 \in {\Hom}(Q^2, \L)$ by
$$\begin{array}{rcl}
\mo(f^2_{1,1}) \otimes \mt(f^2_{1,1}) = e_1 \otimes e_{s+1} & \mapsto & \alpha_{(1,0)} \alpha_{(1,1)} \cdots \alpha_{(1,q)} \beta_{1^0} \beta_{1^1} \cdots \beta_{1^p} = \rho_1,\\
\mbox{else } & \mapsto & 0.\\
\end{array}$$
Then $h_1$ is in $\Ker\, d_3$.
\end{prop}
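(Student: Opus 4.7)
The plan is to show $h_1 \circ A_3 = 0$ on every generator $\mo(y)\otimes\mt(y)$ with $y \in f^3$. Note first that $h_1$ is a well-defined $\Lambda$-bimodule map since $\rho_1 = e_1 \rho_1 e_{s+1}$. Because $h_1$ vanishes on every summand of $Q^2$ except $\Lambda e_1 \otimes e_{s+1}\Lambda$, it suffices to control the $f^2_{1,1}$-component of $A_3(\mo(y)\otimes\mt(y))$. Writing $y = \sum_i f^2_i p_i = \sum_i q_i f^2_i r_i$ as in \cite{GSZ}, with $p_i,q_i$ in the ideal generated by arrows, the description of $A_3$ recalled from \cite{GS} shows that this component equals $\mo(y)\otimes p_{1,1} - q_{1,1}\otimes r_{1,1}$. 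Since $\mo(y) = \mo(f^2_{1,1}) = e_1$ whenever $f^2_{1,1}$ contributes,
$$h_1 A_3(\mo(y)\otimes\mt(y)) = \rho_1 p_{1,1} - q_{1,1}\rho_1 r_{1,1} \in \Lambda.$$

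The heart of the argument is then the socle claim that $\rho_1\cdot a = 0$ for every arrow $a$ starting at $e_{s+1}$ and $b\cdot\rho_1 = 0$ for every arrow $b$ ending at $e_1$. The two arrows starting at $e_{s+1}$ are $\alpha_{(s+1,0)}$ and $\beta_{(s+2)^0}$: the product $\rho_1\alpha_{(s+1,0)}$ is precisely the $t'=0$ case of the $f^2_5$-style family listed in Section \ref{sec1}, which lies in $I$ although it was dropped when minimalising to $f^2$, while $\rho_1\beta_{(s+2)^0}$ contains the factor $\beta_{1^p}\beta_{(s+2)^0} = f^2_{2,1}$. Dually, the arrows ending at $e_1$ are $\alpha_{(2,q)}$, which gives $0$ because $\rho_1$ begins with $\alpha_{(1,0)}$ and $\alpha_{(2,q)}\alpha_{(1,0)} = f^2_{3,2}$, and $\beta_{i^p}$ for the unique $i$ with $i+s\equiv 1\pmod k$, which gives $0$ by the $j=p$ boundary case of the $f^2_4$-style family. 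Since $p_{1,1}$ and $q_{1,1}$ lie in the arrow ideal and hence have positive length, $\rho_1 p_{1,1} = 0$ and $q_{1,1}\rho_1 = 0$ in $\Lambda$, so the displayed expression vanishes and $h_1\in\Ker d_3$.

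The expected obstacle is the bookkeeping in the socle step, especially the recognition that the discarded boundary cases ($t'=0$, $t'=q$, $j=0$, and $j=p$) of the $f^2_4$- and $f^2_5$-style families, while absent from the minimal set $f^2$, still sit in $I$ and therefore vanish in $\Lambda$; this is exactly what forces $\rho_1$ to annihilate every positive-length path on either side. The argument avoids having to compute $f^3$ explicitly, using only the general shape of $A_3$ from \cite{GS} together with the property from \cite{GSZ} that the multipliers $p_i,q_i$ belong to the arrow ideal.
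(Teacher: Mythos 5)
Your proposal is correct and follows essentially the same route as the paper: both rely on the general form of $A_3$ from \cite{GS}, with the multipliers $p_u,q_u$ in the arrow ideal, reduced to the socle-type fact that $\rho_1\,\rrad = 0 = \rrad\,\rho_1$. The only difference is one of detail: you verify the left-hand annihilation ($\alpha_{(2,q)}\rho_1=0$ and $\beta_{i^p}\rho_1=0$ via the discarded boundary cases of the $f^2_3$- and $f^2_4$-families) explicitly, where the paper simply says ``similarly.''
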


\begin{proof}
We note that $\rho_1 \neq 0$ so $h_1$ is a non-zero map.
To show that $h_1 \in {\Ker}\,d_3$ we show that $h_1A_3 = 0$.
First, observe that $\rho_1 \beta_{(s+2)^0} = 0$
and $\rho_1 \alpha_{(s+1,0)}= 0.$ Hence $\rho_1 {\rrad} = 0$. Similarly we have ${\rrad} \rho_1 = 0.$

Recall that $Q^3 = \coprod_{y \in f^3} \Lambda \mo(y) \otimes \mt(y) \Lambda$ where $y = \sum_u f^2_u p_u = \sum_u q_u f^2_u r_u$ and $p_u, q_u$ are in the ideal generated by the arrows. For $y \in f^3$ the component of $A_3(\mo(y) \otimes \mt(y))$ in $\Lambda \mo(f_{u}^{2}) \otimes \mt(f_{u}^{2}) \Lambda$ is
$$\Sigma (\mo(y) \otimes_{f^2_u} p_u - q_u \otimes_{f^2_u} r_u).$$ Then $$h_1A_3 (\mo(y) \otimes \mt(y)) = \Sigma_u (h_1(\mo(y) \otimes_{f^2_u} p_u) - q_u h_1(\mo(f^2_u) \otimes_{f^2_u} \mt(f^2_u)) r_u).$$
Thus $h_1(\mo(y) \otimes_{f^2_u} p_u) = \left\{
\begin{array}{ll}
\rho_1 p_u & \mbox{ if } f^2_u = f^2_{1,1}\\
0 & \mbox{ otherwise}.\\
\end{array}\right.$

As $p_u$ is in the arrow ideal of $K{\mathcal Q}$, $\rho_1 p_u \in \rho_1 {\rrad} = 0.$ So we have $h_1(\mo(y) \otimes p_u) = 0.$ Similarly $h_1(q_u \otimes_{f^2_u} r_u) = 0$ as $q_u \rho_1 r_u \in {\rrad} \rho_1 r_u = 0.$
Therefore $h_1A_3(\mo(y) \otimes \mt(y)) = 0$ for all $y \in f^3$ so $h_1A_3 = 0$. Thus $h_1 \in
{\Ker}\,d_3$ as required.
\end{proof}

\begin{thm} \label{thm1}
For $\L = \L(p,q, k, s,\lambda)$ where $p, q$ are positive integers, $k \geq 2$, $1 \leq s \leq k-1$ with ${\gcd}(s+2, k) = 1 = {\gcd}(s, k)$ and  $\lambda \in K\setminus\{0\}$, we have ${\HH}^2(\L) \neq 0$.
\end{thm}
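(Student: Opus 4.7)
The strategy is straightforward given the groundwork laid out: combine Proposition \ref{b1}, which exhibits an explicit $h_1 \in \Ker\,d_3$ sending $\mo(f^2_{1,1}) \otimes \mt(f^2_{1,1})$ to $\rho_1 \neq 0$ and all other generators of $Q^2$ to $0$, with the explicit description of $\Im\,d_2$ computed just before Proposition \ref{pro2}. Since $\HH^2(\L) = \Ker\,d_3/\Im\,d_2$, it suffices to prove that $h_1 \notin \Im\,d_2$, and then $[h_1]$ is a non-zero class in $\HH^2(\L)$.

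First I would handle the case $1 \leq s \leq k-2$. Suppose, for contradiction, that $h_1 = fA_2$ for some $f \in \Hom(Q^1, \L)$. The computation preceding Proposition \ref{pro2} shows that every element of $\Im\,d_2$ is of the form $\mo(f^2_{1,i}) \otimes \mt(f^2_{1,i}) \mapsto c_i' \rho_i$ on the generators $f^2_{1,i}$, and $0$ on the generators $f^2_{2,i}, f^2_{3,i}, f^2_{4,i,j}, f^2_{5,i,t'}$, where the scalars $c_1', \ldots, c_k' \in K$ are subject to the single relation $\sum_{i=1}^{k} c_i' = 0$. Matching this against the definition of $h_1$ forces $c_1' = 1$ and $c_i' = 0$ for $i \in \{2, \ldots, k\}$, which gives $\sum_{i=1}^{k} c_i' = 1 \neq 0$, a contradiction.

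For the case $s = k-1$, the image $\Im\,d_2$ is larger by $k$ dimensions because of the additional freedom in the coefficients $d_{1,i}$ appearing in $fA_2(e_{(i,q)} \otimes_{f^2_{3,i}} e_{(i-1,1)}) = d_{1,i}\,\alpha_{(i,q)}\beta_{i^0}\cdots\beta_{i^p}\alpha_{(i-1,0)}$. Crucially, however, these extra contributions lie in the $\L\mo(f^2_{3,i}) \otimes \mt(f^2_{3,i})\L$ summands of $\Hom(Q^2, \L)$, not in the $\L\mo(f^2_{1,i}) \otimes \mt(f^2_{1,i})\L$ summand where $h_1$ is non-zero; and the constraint $\sum_{i=1}^{k} c_i' = 0$ on the coefficients of the $\rho_i$ still holds. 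The same coefficient-matching argument therefore applies and produces the same contradiction.

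There is no real obstacle here, since all the hard work has been done in setting up the explicit form of $\Im\,d_2$ and constructing $h_1$; the only point that requires attention is keeping the two cases of $s$ separate and verifying in the $s = k-1$ case that the enlargement of $\Im\,d_2$ does not touch the $f^2_{1,i}$ components. Combining the three paragraphs above yields $[h_1] \neq 0$ in $\HH^2(\L)$, establishing $\HH^2(\L) \neq 0$ for all $s$ in the stated range.
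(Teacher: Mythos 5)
Your proposal is correct and follows essentially the same route as the paper: assume $h_1 \in \Im\,d_2$, match the $f^2_{1,i}$-components to force $c_1'=1$ and $c_i'=0$ for $i\geq 2$ (using $\rho_i \neq 0$), and contradict $\sum_{i=1}^k c_i' = 0$. Your explicit remark that in the $s=k-1$ case the extra $d_{1,i}$-contributions to $\Im\,d_2$ live only in the $f^2_{3,i}$-components and so cannot interfere is a slightly more careful spelling-out of a point the paper's uniform argument leaves implicit, but it is the same proof.
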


\begin{proof}
Consider the element $h_1 + {\Im}\,d_2$ of ${\HH}^2(\L)$ where $h_1$ is given as in Proposition \ref{b1} by
$$\begin{array}{rcl}
\mo(f^2_{1,1}) \otimes \mt(f^2_{1,1}) = e_1 \otimes e_{s+1} & \mapsto & \rho_1,\\
\mbox{else } & \mapsto & 0.\\
\end{array}$$

Suppose for contradiction that $h_1 \in {\Im}\,d_2.$ Then $h_1(e_1 \otimes e_{s+1}) = fA_2(e_1 \otimes e_{s+1})$. So $\rho_1 = c'_1 \rho_1$ and so $c'_1 = 1$. Also $h_1(e_i \otimes e_{s+i}) = fA_2(e_i \otimes e_{s+i})$ where $i \in \{2, \ldots, k\}.$ Then $0 = c'_i \rho_i,$ where $i \in \{2, \ldots, k\}.$ But this contradicts having  $\Sigma_{i=1}^k c_i' = 0$. Therefore $h_1 \notin {\Im}\,d_2$, that is, $h_1 + {\Im}\,d_2 \neq 0 + {\Im}\,d_2$. So $h_1 + {\Im}\,d_2$ is a non-zero element in ${\HH}^2(\L).$
\end{proof}

Note that we can also define maps $h_i: Q^2 \rightarrow \L$ by
$$\begin{array}{rcl}
\mo(f^2_{1,i}) \otimes \mt(f^2_{1,i}) & \mapsto & \alpha_{(i,0)} \alpha_{(i,1)} \cdots \alpha_{(i,q)} \beta_{i^0} \beta_{i^1} \cdots \beta_{i^p}= \rho_i,\\
\mbox{else } & \mapsto & 0.\\
\end{array}$$
for $i = 2, \ldots, k$. However, $h_1, h_2, \ldots, h_k$ all represent the same element $h_1 + {\Im}\,d_2$ of ${\HH}^2(\L)$.

\vspace*{.3cm}
As we have found a non-zero element in ${\HH}^2(\L)$ we know that ${\dim}\,{\HH^2(\L)} \geq 1$. In the case $1 \leq s \leq k-2$ we have the following result, the proof of which is immediate from Proposition \ref{pro2}, Corollary \ref{cor1} and Theorem \ref{thm1}.

\begin{prop} \label{pro4}
For $\L = \L(p,q, k, s,\lambda)$ where $1 \leq s \leq k-2$, we have ${\dim}\,{\Ker}\,d_3 = k$ and ${\dim}\,{\HH^2(\L)} = 1.$
\end{prop}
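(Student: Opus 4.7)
The plan is a short dimension count that pulls together the three preceding results, with no further computation needed. The key identity I will use is
$$\dim\,\HH^2(\L) \;=\; \dim\,\Ker\,d_3 \;-\; \dim\,\Im\,d_2,$$
which is valid because $\Im\,d_2 \subseteq \Ker\,d_3$ (this is just the statement that $d_3 d_2 = 0$, i.e.\ that we have a complex, which in turn comes from the fact that $A_2 A_3 = 0$ in the minimal bimodule resolution). So the whole proof reduces to bounding the two dimensions on the right-hand side.

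First I would quote Proposition \ref{pro2} to fix $\dim\,\Im\,d_2 = k-1$ in the range $1 \leq s \leq k-2$, and Corollary \ref{cor1} to obtain the upper bound $\dim\,\Ker\,d_3 \leq k$. Substituting these two into the displayed formula immediately yields
$$\dim\,\HH^2(\L) \;\leq\; k - (k-1) \;=\; 1.$$
For the matching lower bound I would appeal to Theorem \ref{thm1}, which exhibits the concrete cocycle $h_1$ whose class $h_1 + \Im\,d_2$ is nonzero in $\HH^2(\L)$; hence $\dim\,\HH^2(\L) \geq 1$. Combining the two inequalities gives $\dim\,\HH^2(\L) = 1$, and feeding this back into the dimension formula forces $\dim\,\Ker\,d_3 = (k-1) + 1 = k$, as required.

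There is essentially no obstacle at this stage — all the substantive work (identifying the image of $d_2$, bounding $\Hom(Q^2,\L)$, and constructing the explicit nonzero class $h_1$) has already been carried out in the cited results, and this proposition is the bookkeeping step that pins both dimensions exactly. The only thing worth a sentence in the write-up is the observation that $\dim\,\Ker\,d_3$ is forced to achieve its upper bound $k$, which is a pleasant byproduct: it tells us that every $h \in \Hom(Q^2,\L)$ of the form described in Proposition \ref{bb1} is in fact a cocycle, even though we only needed the single cocycle $h_1$ to produce the nontrivial cohomology class.
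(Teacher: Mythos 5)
Your proposal is correct and is essentially the paper's own argument: the paper states that the result is immediate from Proposition \ref{pro2}, Corollary \ref{cor1} and Theorem \ref{thm1}, and your write-up simply makes that dimension count explicit ($\dim\,\HH^2(\L)=\dim\,\Ker\,d_3-\dim\,\Im\,d_2$, bounded above by $k-(k-1)=1$ and below by $1$ via the class of $h_1$, forcing $\dim\,\Ker\,d_3=k$). No gaps; the reasoning matches the intended proof.
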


\vspace*{.5cm}

For the case $s = k-1$, we need more details to find ${\Ker}\,d_3$. Following \cite{GSZ} we may choose the set $f^3$ to consist of the following elements:
$$\{f^3_{1,i}, f^3_{2,i}, f^3_{3,i,t'},f^3_{4,i,j}\},\mbox{ where }$$

$$\begin{array}{l c l l l l l l}
f^3_{1,i}= f^2_{1,i}  \alpha_{(i-1,0)} \alpha_{(i-1,1)} + \beta_{(i+1)^0} \beta_{(i+1)^1} \cdots \beta_{(i+1)^p} \alpha_{(i,0)} \alpha_{(i,1)} \cdots \alpha_{(i,q-1)} f^2_{3,i} \alpha_{(i-1,1)} \\
\end{array}$$
$$\begin{array}{l c l l l l l l}
 &=& \alpha _{(i,0)} f^2_{5,i,1} \\
&& \hspace*{.8cm} \in e_i K{\mathcal Q}e_{(i-1,2)} \mbox{ where $i \in \{2, \ldots, k\}$},\\
f^3_{1,1}&=& f^2_{1,1}  \alpha_{(k,0)} \alpha_{(k,1)} + \lambda \beta_{2^0} \beta_{2^1} \cdots \beta_{2^p} \alpha_{(1,0)} \alpha_{(1,1)} \cdots \alpha_{(1,q-1)} f^2_{3,1} \alpha_{(k,1)} \\
      &=& \alpha _{(1,0)} f^2_{5,1,1}\\
      && \hspace*{.8cm} \in e_1 K{\mathcal Q}e_{(k,2)}, \\
f^3_{2,i} &=& f^2_{1,i} \beta_{i^0} \beta_{i^1} -  \alpha_{(i,0)} \alpha_{(i,1)} \cdots \alpha_{(i,q)} \beta_{i^0} \beta_{i^1} \cdots \beta_{i^{(p-1)}}f^2_{2,i} \beta_{i^1}\\
&=& - \beta_{(i+1)^0} f^2_{4,i+1,1} \\
&& \hspace*{.8cm}\in e_i K{\mathcal Q}e_{i^2} \mbox{ where $i \in \{2, \ldots, k\}$},\\
f^3_{2,1} &=& f^2_{1,1} \beta_{1^0} \beta_{1^1} -  \alpha_{(1,0)} \alpha_{(1,1)} \cdots \alpha_{(1,q)} \beta_{1^0} \beta_{1^1} \cdots \beta_{1^{(p-1)}}f^2_{2,1} \beta_{1^1}\\
&=& - \lambda \beta_{2^0} f^2_{4,2,1} \\
&&\hspace*{.8cm} \in e_1 K{\mathcal Q}e_{1^2},\\
f^3_{3,i,t'} &=& f^2_{5,i,t'} \alpha_{(i-1,t'+1)}\\
&=& \alpha_{(i,t')} f^2_{5,i,t'+1}\\
 &&\hspace*{.8cm} \in e_{(i,t')} K{\mathcal Q}e_{(i-1,t'+2)} \mbox{ where $i \in \{1, \ldots, k\}$ and $t' \in \{1, \ldots, q-2\}$},\\
f^3_{3,i,q-1} &=& f^2_{5,i,q-1} \alpha_{(i-1,q)} - \alpha_{(i,q-1)} f^2_{3,i} \alpha_{(i-1,1)} \cdots \alpha_{(i-1,q)}\beta_{(i-1)^0} \beta_{(i-1)^1} \cdots \beta_{(i-1)^p}\\
&=& - \alpha_{(i,q-1)} \alpha_{(i,q)} f^2_{1,i-1}\\
&& \hspace*{.8cm} \in e_{(i,q-1)} K{\mathcal Q}e_{i-2} \mbox{ where $i \in \{1,3, \ldots, k\}$},\\
f^3_{3,2,q-1} &=& \lambda f^2_{5,2,q-1} \alpha_{(1,q)} - \alpha_{(2,q-1)} f^2_{3,2} \alpha_{(1,1)} \cdots \alpha_{(1,q)}\beta_{1^0} \beta_{1^1} \cdots \beta_{1^p}\\
&=& - \alpha_{(2,q-1)} \alpha_{(2,q)} f^2_{1,1} \\
&&\hspace*{.8cm} \in e_{(2,q-1)} K{\mathcal Q}e_k,\\
\end{array}$$
$$\begin{array}{l c l l l l l l}
f^3_{3,i,q} = f^2_{3,i}  \alpha_{(i-1,1)} \cdots \alpha_{(i-1,q)} \beta_{(i-1)^0} \beta_{(i-1)^1} \cdots \beta_{(i-1)^p} \alpha_{(i-2,0)} - \alpha_{(i,q)} f^2_{1,i-1} \alpha_{(i-2,0)}\\
\end{array}$$
$$\begin{array}{l c l l l l l l}
&=& \alpha_{(i,q)} \beta_{i^0} \beta_{i^1} \cdots \beta_{i^p} \alpha_{(i-1,0)} \alpha_{(i-1,1)}\cdots \alpha_{(i-1,q-1)} f^2_{3,i-1}\\
 && \hspace*{.8cm} \in e_{(i,q)} K{\mathcal Q}e_{(i-2,1)} \mbox{ where $i \in \{1,3, \ldots, k\}$},\\
f^3_{3,2,q} &=& f^2_{3,2}  \alpha_{(1,1)} \cdots \alpha_{(1,q)} \beta_{1^0} \beta_{1^1} \cdots \beta_{1^p} \alpha_{(k,0)} - \alpha_{(2,q)} f^2_{1,1} \alpha_{(k,0)}\\
&=& \lambda \alpha_{(2,q)} \beta_{2^0} \beta_{2^1} \cdots \beta_{2^p} \alpha_{(1,0)} \alpha_{(1,1)}\cdots \alpha_{(1,q-1)} f^2_{3,1} \\
&&\hspace*{.8cm} \in e_{(2,q)} K{\mathcal Q}e_{(k,1)},\\
f^3_{4,i,j} &=& f^2_{4,i,j} \beta_{(i-1)^{(j+1)}} \\
&=& \beta_{i^j} f^2_{4,i,j+1}\\
 &&\hspace*{.8cm} \in e_{i^j} K{\mathcal Q}e_{(i-1)^{(j+2)}} \mbox{ where $i \in \{1, \ldots, k\}$ and $j \in \{1, \ldots, p-2\}$},\\
f^3_{4,i,p-1} &=& f^2_{4,i,p-1} \beta_{(i-1)^p} - \beta_{i^{(p-1)}} f^2_{2,i} \beta_{i^1} \cdots \beta_{i^p} \alpha_{(i-1,0)} \alpha_{(i-1,1)}\cdots \alpha_{(i-1,q)}\\
&=&  \beta_{i^{(p-1)}} \beta_{i^p} f^2_{1,i-1} \\
&&\hspace*{.8cm} \in  e_{i^{(p-1)}} K{\mathcal Q}e_{i-2} \mbox{ where $i \in \{1,3, \ldots, k\}$},\\
f^3_{4,2,p-1} &=& f^2_{4,2,p-1} \beta_{1^p} - \lambda \beta_{2^{(p-1)}} f^2_{2,2} \beta_{2^1} \cdots \beta_{2^p} \alpha_{(1,0)} \alpha_{(1,1)}\cdots \alpha_{(1,q)}\\
&=&  \beta_{2^{(p-1)}} \beta_{2^p} f^2_{1,1} \\
&&\hspace*{.8cm} \in  e_{2^{(p-1)}} K{\mathcal Q}e_k,\\
f^3_{4,i,p} &=& f^2_{2,i} \beta_{i^1} \cdots \beta_{i^p} \alpha_{(i-1,0)} \alpha_{(i-1,1)}\cdots \alpha_{(i-1,q)} \beta_{(i-1)^0} + \beta_{i^p} f^2_{1,i-1} \beta_{(i-1)^0}\\
&=&  \beta_{i^p} \alpha_{(i-1,0)} \alpha_{(i-1,1)}\cdots \alpha_{(i-1,q)}\beta_{(i-1)^0} \beta_{(i-1)^1} \cdots \beta_{(i-1)^{(p-1)}} f^2_{2,i-1}\\
&& \hspace*{.8cm}\in  e_{i^p} K{\mathcal Q}e_{(i-1)^1} \mbox{ where $i \in \{1,3, \ldots, k\}$},\\
 \end{array}$$
\newpage
$$\begin{array}{l c l l l l l l}
 f^3_{4,2,p} &=& \lambda f^2_{2,2} \beta_{2^1} \cdots \beta_{2^p} \alpha_{(1,0)} \alpha_{(1,1)}\cdots \alpha_{(1,q)} \beta_{1^0} + \beta_{2^p} f^2_{1,1} \beta_{1^0}\\
&=&  \beta_{2^p} \alpha_{(1,0)} \alpha_{(1,1)}\cdots \alpha_{(1,q)}\beta_{1^0} \beta_{1^1} \cdots \beta_{1^{(p-1)}} f^2_{2,1} \\
&&\hspace*{.8cm} \in  e_{2^p} K{\mathcal Q}e_{1^1}.
\end{array}$$

Thus the projective bimodule $Q^3$ is $ \bigoplus_{y \in f^3} \L \mo(y) \otimes \mt(y) \L$\\
$= \bigoplus_{i=1}^{k}[(\L e_i \otimes_{f^3_{1,i}} e_{(i-1,2)}\L) \oplus (\L  e_i \otimes_{f^3_{2,i}} e_{i^2}\L)$
$\bigoplus_{t'=1}^{q-2}(\L e_{(i,t')} \otimes_{f^3_{3,i,t'}} e_{(i-1,t'+2)} \L) \\
\hspace*{.3cm}\oplus (\L  e_{(i,q-1)}\otimes_{f^3_{3,i,q-1}} e_{i-2} \L) \oplus (\L e_{(i,q)} \otimes_{f^3_{3,i,q}} e_{(i-2,1)}\L)$
$\bigoplus_{j=1}^{p-2} (\L e_{i^j} \otimes_{f^3_{4,i,j}} e_{(i-1)^{(j+2)}}\L)\\
\hspace*{.2cm}\oplus (\L e_{i^{(p-1)}} \otimes_{f^3_{4,i,p-1}} e_{i-2}\L) \oplus (\L e_{i^p} \otimes_{f^3_{4,i,p}} e_{(i-1)^1}\L) ].$

\vspace*{.5cm}
Now we determine $\Ker\,d_3$ in the case $s=k-1$. Let $h \in {\Ker}\,d_3$, so $h \in
{\Hom}(Q^2, \L)$ and $d_3h = 0$. Recall that for $i \in \{1,\ldots,k\}$, $h$ is given by
$$\begin{array}{rcl}
\mo(f^2_{1,i}) \otimes \mt(f^2_{1,i}) & \mapsto & d_i \alpha_{(i,0)} \alpha_{(i,1)} \cdots \alpha_{(i,q)} \beta_{i^0} \beta_{i^1} \cdots \beta_{i^p} + d_i' \alpha_{(i,0)} \alpha_{(i,1)} \cdots \alpha_{(i,q)},\\
\mo(f^2_{3,i}) \otimes \mt(f^2_{3,i}) & \mapsto & d_i''  \alpha_{(i,q)} \beta_{i^0} \beta_{i^1} \cdots \beta_{i^p} \alpha_{(i-1,0)},\\
\mbox{else } & \mapsto & 0,\\
\end{array}$$ where $d_i, d_i', d_i''$ are in $K$.

Then for $i \in \{1, \ldots, k\}$, we have $hA_3(e_i \otimes_{f^3_{1,i}} e_{(i-1,2)})$
$$\begin{array}{lcl}
& = & h(e_i \otimes_{f^2_{1,i}} e_{i-1}) \alpha_{(i-1,0)} \alpha_{(i-1,1)} \\
&&+\beta_{(i+1)^0} \beta_{(i+1)^1} \cdots \beta_{(i+1)^p} \alpha_{(i,0)}\alpha_{(i,1)} \cdots \alpha_{(i,q-1)} h(e_{(i,q)} \otimes_{f^2_{3,i}} \alpha_{(i-1,1)}) \\
&&- \alpha_{(i,0)} h(e_{(i,1)} \otimes_{f^2_{5,i,1}} e_{(i-1,2)})\\
 &=& d_i \alpha_{(i,0)} \alpha_{(i,1)} \cdots \alpha_{(i,q)}\beta_{i^0} \beta_{i^1} \cdots \beta_{i^p} \alpha_{(i-1,0)} \alpha_{(i-1,1)} \\
 &&+ d_i'\alpha_{(i,0)} \alpha_{(i,1)} \cdots \alpha_{(i,q)} \alpha_{(i-1,0)} \alpha_{(i-1,1)}\\
&& + d_i''  \beta_{(i+1)^0} \beta_{(i+1)^1} \cdots \beta_{(i+1)^p}\alpha_{(i,0)} \alpha_{(i,1)}\cdots \alpha_{(i,q)}\beta_{i^0} \beta_{i^1} \cdots \beta_{i^p} \alpha_{(i-1,0)}\\
&=& 0.
\end{array}$$
In a similar way we can show that $hA_3(e_1 \otimes_{f^3_{1,1}} e_{(k,2)}) =0$.

For $i \in \{2, \ldots, k\}$, we have $hA_3(e_i \otimes_{f^3_{2,i}} e_{i^2})$
$$\begin{array}{lcl}
&=& h(e_i \otimes_{f^2_{1,i}} e_{i-1}) \beta_{i^0} \beta_{i^1} \\
&&- \alpha_{(i,0)} \alpha_{(i,1)} \cdots \alpha_{(i,q)} \beta_{i^0} \beta_{i^1} \cdots \beta_{i^{(p-1)}} h(e_{i^p} \otimes_{f^2_{2,i}} e_{i^1}) \beta_{i^1} \\
&&+ \beta_{(i+1)^0} h(e_{(i+1)^1} \otimes_{f^2_{4,i+1,1}} e_{i^2})\\
&=& d_i \alpha_{(i,0)} \alpha_{(i,1)} \cdots \alpha_{(i,q)} \beta_{i^0} \beta_{i^1} \cdots \beta_{i^p}\beta_{i^0} \beta_{i^1}
+ d_i'\alpha_{(i,0)} \alpha_{(i,1)} \cdots \alpha_{(i,q)} \beta_{i^0} \beta_{i^1} \\
&=& d_i'\alpha_{(i,0)} \alpha_{(i,1)} \cdots \alpha_{(i,q)} \beta_{i^0} \beta_{i^1}.
\end{array}$$
As $h \in {\Ker}\,d_3$ we have $d_i' =0$ for $i \in \{2, \ldots, k\}$.

Similarly it can be shown that $hA_3(e_1\otimes_{f^3_{2,1}} e_{1^2}) =  d_1'\alpha_{(1,0)} \alpha_{(1,1)} \cdots \alpha_{(1,q)} \beta_{1^0} \beta_{1^1}$ so that $d_1' =0$.

We also have $hA_3(\mo(f^3_{3,i,t'}) \otimes_{f^3_{3,i,t'}} \mt(f^3_{3,i,t'})) = 0$ for $i \in \{1, \ldots, k\}$ and $t' \in \{1, \ldots, q\}.$ Finally, putting
$hA_3(\mo(f^3_{4,i,j}) \otimes_{f^3_{4,i,j}} \mt(f^3_{4,i,j})) = 0$ does not give any new information for $i \in \{1, \ldots, k\}, j \in \{1, \ldots, p\}$.

\vspace*{.3cm}
Thus $h$ is given by
$$\begin{array}{rcl}
\mo(f^2_{1,i}) \otimes \mt(f^2_{1,i}) & \mapsto & d_i \alpha_{(i,0)} \alpha_{(i,1)} \cdots \alpha_{(i,q)} \beta_{i^0} \beta_{i^1} \cdots \beta_{i^p} \mbox{   for } i \in \{1,2, \ldots, k\},\\
\mo(f^2_{3,i}) \otimes \mt(f^2_{3,i}) & \mapsto & d_i''  \alpha_{(i,q)} \beta_{i^0} \beta_{i^1} \cdots \beta_{i^p} \alpha_{(i-1,0)} \mbox{   for } i \in \{1, \ldots, k\},\\
\mbox{else } & \mapsto & 0,\\
\end{array}$$ where $d_i, d_i''$ for $i \in \{1,\ldots, k\}$ are in $K$. It is clear that there is no dependency between $d_i, d_i''$, and therefore ${\dim}\,{\Ker}\,d_3 = 2k$.

\begin{prop} \label{pro3}
For $\L =\L(p,q, k, s,\lambda)$ and $s=k-1$, we have ${\dim}\,{\Ker}\,d_3 = 2k.$
\end{prop}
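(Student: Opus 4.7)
The plan is to take the generic $h\in\Hom(Q^2,\L)$ from the preceding discussion, parametrised by the $3k$ scalars $d_i,d_i',d_i''$ (for $i=1,\dots,k$), and to determine exactly which choices satisfy $hA_3=0$. This amounts to evaluating $hA_3(\mo(y)\otimes \mt(y))$ on each of the ten families of generators $y\in f^3$ listed just above, using the formula for $A_3$ to rewrite every evaluation as a signed sum of left/right translates of the values $h(\mo(f^2_u)\otimes_{f^2_u}\mt(f^2_u))$ over the $f^2$-summands that contribute.

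First I would establish that on the families $f^3_{1,i}$, $f^3_{3,i,t'}$ (for every allowed $t'$, including $t'=q-1$ and $t'=q$), and $f^3_{4,i,j}$ (for every allowed $j$), every contribution vanishes identically in $\L$ regardless of the scalars. The reasons are uniform across these cases: the $d_i'$-contributions always pick up the factor $\alpha_{(i,q)}\alpha_{(i-1,0)}=f^2_{3,i}\in I$ (or the analogous $\beta_{i^p}\beta_{i^0}=f^2_{2,i}\in I$, which holds here because $s+i+1\equiv i\pmod k$ when $s=k-1$); the $d_i$-contributions produce expressions of the form $\rho_j\cdot r$ with $r$ an arrow, which vanish because $\rho_j$ is a socle element so $\rho_j\,\rrad=0$ (the same principle already used in Proposition \ref{b1}); and the $d_i''$-contributions, which introduce longer paths, can be rewritten using the relation $\beta_{j^0}\cdots\beta_{j^p}\alpha_{(j-1,0)}\cdots\alpha_{(j-1,q)}=\rho_{j-1}$ in $\L$ (which is $f^2_{1,j-1}$ specialised to $s=k-1$) to produce $\rho_{j-1}\cdot$(arrow), and hence also vanish.

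The single family yielding a nontrivial constraint is $f^3_{2,i}$. Using both presentations
\[
f^3_{2,i}=f^2_{1,i}\beta_{i^0}\beta_{i^1}-\alpha_{(i,0)}\cdots\beta_{i^{p-1}}f^2_{2,i}\beta_{i^1}=-\beta_{(i+1)^0}f^2_{4,i+1,1},
\]
together with the vanishing of $h$ on $f^2_{2,i}$ and $f^2_{4,i+1,1}$, the evaluation collapses to
\[
hA_3(e_i\otimes_{f^3_{2,i}}e_{i^2})=d_i\,\rho_i\beta_{i^0}\beta_{i^1}+d_i'\,\alpha_{(i,0)}\cdots\alpha_{(i,q)}\beta_{i^0}\beta_{i^1}.
\]
The first summand vanishes because $\rho_i\beta_{i^0}$ ends in $\beta_{i^p}\beta_{i^0}=f^2_{2,i}\in I$, while the second summand is nonzero in $\L$. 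Hence we are forced to $d_i'=0$ for every $i\in\{1,\dots,k\}$.

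Once $d_i'=0$ is imposed for all $i$, the remaining $2k$ scalars $d_i,d_i''$ are independent and unconstrained, giving $\dim\Ker d_3=2k$. The main obstacle is really just bookkeeping: one has to verify carefully in each of the families $f^3_{1,i}$, $f^3_{3,i,q-1}$, and $f^3_{3,i,q}$ that the $d_i''$-contribution, which introduces a long path crossing two Nakayama cycles, does indeed vanish. This uses crucially both the socle annihilation property of $\rho_j$ and the collapse $\beta_{j^p}\beta_{j^0}=f^2_{2,j}$ peculiar to $s=k-1$; without these ingredients, extra terms generated by $d_i''$ would survive and impose additional constraints.
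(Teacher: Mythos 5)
Your proposal is correct and follows essentially the same route as the paper: parametrise $h$ by the $3k$ scalars $d_i,d_i',d_i''$, evaluate $hA_3$ on the listed $f^3$-generators, observe that only the family $f^3_{2,i}$ forces $d_i'=0$, and conclude ${\dim}\,{\Ker}\,d_3=2k$. One small inaccuracy: the evaluations at $f^3_{4,i,p-1}$ and $f^3_{4,i,p}$ do not vanish identically as you claim, since their $d_{i-1}'$-terms are $\pm d_{i-1}'\,\beta_{i^{p-1}}\beta_{i^p}\alpha_{(i-1,0)}\cdots\alpha_{(i-1,q)}$ and $\pm d_{i-1}'\,\beta_{i^p}\alpha_{(i-1,0)}\cdots\alpha_{(i-1,q)}\beta_{(i-1)^0}$, which are nonzero paths in $\L$ (there is no relation $\beta_{i^p}\alpha_{(i-1,0)}$); however they merely reproduce the constraint $d_{i-1}'=0$ already obtained from $f^3_{2,i}$ (the paper's ``no new information''), so the final count is unaffected.
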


Using Propositions \ref{pro2}, \ref{pro4}, \ref{pro3} and Theorem \ref{thm1} we get the main result of this section.

\begin{thm} \label{thm2}
For $\L = \L(p,q, k, s,\lambda)$ where $p,q,s,k$
 are integers such that $p,q \geq 0, k \geq 2, 1 \leq s \leq k-1,\gcd(s,k) = 1, \gcd(s+2,k) = 1$ and $\lambda \in K\setminus\{0\}$, we have ${\dim}\,{\HH^2(\L)} = 1.$
\end{thm}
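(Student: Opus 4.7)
The theorem is really an assembly step: all the genuine work has already been done in the preceding propositions and in Theorem \ref{thm1}. My plan is simply to treat the two ranges of $s$ separately and in each case read off $\dim \HH^2(\L)$ from the formula
\[
\dim \HH^2(\L) \;=\; \dim \Ker d_3 \;-\; \dim \Im d_2.
\]

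For the first case, $1 \leq s \leq k-2$, the result is already packaged as Proposition \ref{pro4}, so nothing further is needed beyond quoting it. (That proposition itself was obtained from Proposition \ref{pro2}, which gave $\dim \Im d_2 = k-1$, together with Corollary \ref{cor1}, which gave $\dim \Ker d_3 \leq k$, and Theorem \ref{thm1}, which produced a nonzero element of $\HH^2(\L)$ forcing $\dim \Ker d_3 \geq k$; the two bounds combine to give $\dim \HH^2(\L) = 1$.)

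For the second case, $s = k-1$, I would invoke Proposition \ref{pro2} to get $\dim \Im d_2 = 2k-1$ and Proposition \ref{pro3} to get $\dim \Ker d_3 = 2k$; subtraction gives $\dim \HH^2(\L) = 1$. As a sanity check, Theorem \ref{thm1} already guarantees that the answer is at least $1$, which is consistent with this computation.

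There is no real obstacle here: the hard calculations (identifying a minimal set of relations $f^2$, computing $fA_2$ explicitly to pin down $\Im d_2$, describing $\Hom(Q^2,\L)$, constructing the set $f^3$, and running the cocycle equations $hA_3 = 0$ summand by summand to cut $\Ker d_3$ down to the stated dimension) are all carried out in Sections \ref{sec2} and \ref{sec3}. The only thing to verify at this point is that the bookkeeping lines up: in the case $s = k-1$ the extra $k$ dimensions appearing in both $\Im d_2$ (the parameters $d_{1,i}$ contributed by the terms $\alpha_{(i,q)}\beta_{i^0}\cdots\beta_{i^p}$ in the image of $f$ on the arrows $\alpha_{(i,q)}$) and in $\Ker d_3$ (the parameters $d_i''$ on $f^2_{3,i}$) cancel exactly, leaving the same one-dimensional quotient as in the case $1 \leq s \leq k-2$, generated by the class of $h_1$ from Proposition \ref{b1}.
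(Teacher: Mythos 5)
Your proposal is correct and matches the paper's own argument: the theorem is deduced exactly as you describe, by combining Proposition \ref{pro2}, Proposition \ref{pro4}, Proposition \ref{pro3} and Theorem \ref{thm1}, with the case $s=k-1$ handled by the subtraction $\dim\Ker d_3 - \dim\Im d_2 = 2k-(2k-1)=1$. Nothing further is needed.
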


We conclude this section by giving a deformation of $\L$ which arises from the non-zero element $h_1 + {\Im}\,d_2$ in ${\HH}^2(\L)$.
\vspace*{.5cm}

Let $\eta = h_1 + {\Im}\,d_2$. Recall that $ \rho_1 = \alpha_{(1,0)} \alpha_{(1,1)} \cdots \alpha_{(1,q)} \beta_{1^0} \beta_{1^1} \cdots \beta_{1^p}$. We introduce a new parameter $t$ and define
the algebra $\L_{\eta}$ to be the algebra $K{\mathcal Q}/I_{\eta}$
where $I_{\eta}$ is the ideal generated by the following elements:\\
(1) $f^2_{1,1} - t \rho_1, f^2_{1,j}$ where $j \in \{2, \ldots, k\},$\\
(2) for all $i \in \{1, \ldots, k\}$, $f^2_{2,i}, f^2_{3,i}, f^2_{4,i,j}, f^2_{5,i,t'},$ where $j \in \{1, \ldots, p-1\},$ $t' \in \{1, \ldots, q-1\},$\\
(3) $\rho_1 a$ for all arrows $a$ with $\mt(\rho_1) = \mo(a)$,\\
(4) $a\rho_1$ for all arrows $a$ with $\mt(a) = \mo(\rho_1).$

We now need to show that ${\dim}\,\L_{\eta} = {\dim}\,\L$ to verify that $\L_{\eta}$ is indeed a deformation
of $\L$. First of all, it is clear that ${\dim}\,e_j \L_{\eta} = {\dim}\,e_j \L$
for all $t$ and for all vertices $e_j$ with $e_j \neq e_1$. Now we consider $e_1 \L$ and $e_1 \L_{\eta}$ with $t \neq 1$, and
$e_1 \L_{\eta}$ with $t = 1$. These projective modules are described as
follows:
\vspace*{.3cm}
$$\xymatrix@R=8pt@C=8pt{
&e_1 \L \mbox{,\,} e_1 \L_{\eta} \mbox{\,with\,} t \neq 1 & &&&  & e_1 \L_{\eta} \mbox{\,with\,} t = 1& \\
& 1 \ar@{-}[dl]_{\alpha_{(1,0)}} \ar@{-}[dr]^{\beta_{2^0}} & & & & & 1 \ar@{-}[dl]_{\alpha_{(1,0)}} \ar@{-}[dr]^{\beta_{2^0}} & \\
(1,1) \ar@{-}[d]_{\alpha_{(1,1)}} & & 2^1 \ar@{-}[d]^{\beta_{2^1}} & & & (1,1) \ar@{-}[d]_{\alpha_{(1,1)}} & & 2^1 \ar@{-}[d]^{\beta_{2^1}} \\
(1,2) & & 2^2 & & & (1,2) & & 2^2 \\
\vdots & & \vdots & & & \vdots & & \vdots \\
(1,q) \ar@{-}[d]_{\alpha_{(1,q)}} & & 2^p \ar@{-}[d]^{\beta_{2^p}} & & & (1,q) \ar@{-}[d]_{\alpha_{(1,q)}} & & 2^p \ar@{-}[d]^{\beta_{2^p}} \\
s+1 \ar@{-}[d]_{\beta_{1^0}} & & s+2 \ar@{-}[d]^{\alpha_{(1,0)}}  & & & s+1 \ar@{-}[d]_{\beta_{1^0}} & & s+2 \ar@{-}[d]^{\alpha_{(1,0)}} \\
1^1 \ar@{-}[d]_{\beta_{1^1}} & & (1,1) \ar@{-}[d]^{\alpha_{(1,1)}} & & & 1^1 \ar@{-}[d]_{\beta_{1^1}} & & (1,1) \ar@{-}[d]^{\alpha_{(1,1)}}\\
1^2 & & (1,2) & & & 1^2 & & (1,2) \\
\vdots \ar@{-}[d]_{\beta_{1^{p-1}}} & & \vdots\ar@{-}[d]^{\alpha_{(1,q-1)}} & & & \vdots\ar@{-}[d]_{\beta_{1^{p-1}}}  &&\vdots\ar@{-}[d]^{\alpha_{(1,q-1)}}\\
1^p \ar@{-}[dr]_{\beta_{1^p}} & & (1,q) \ar@{-}[dl]^{\alpha_{(1,q)}} & & & 1^p \ar@{-}[dr]_{\beta_{1^p}} & & (1,q)  \\
& s+1 &  & & & & s+1 & \\
}$$

\vspace*{.5cm}
In each case we see that ${\dim}\,e_1 \L = {\dim}\,e_1 \L_{\eta} = 2p + 2q + 4$ for all $t$. Hence ${\dim}\,\L_{\eta} = {\dim}\,\L.$
Moreover, when $t=1$ the algebras $\L$ and $\L_\eta$ are not isomorphic
since, in this case, $\L_\eta$ is not self-injective. Thus we have found a non-trivial deformation of $\L$.

\begin{thm} \label{thm3}
With $\L, \eta,$ and $\L_{\eta}$ as defined above, then $\L_{\eta}$ is a
non-trivial deformation of $\L$. Moreover, the algebras $\L$ and $\L_\eta$ are socle equivalent.
\end{thm}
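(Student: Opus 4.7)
The plan is to establish three things: that $\L_\eta$ has the same dimension as $\L$ (so it genuinely defines a deformation), that the specialization at $t=1$ is not isomorphic to $\L$ (non-triviality), and that $\L$ and $\L_\eta$ are socle equivalent. Much of the first step is already sketched in the pictures preceding the theorem; I would organize the remaining work around the socle analysis at $t=1$.

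First I would verify $\dim_K \L_\eta = \dim_K \L$ by comparing indecomposable right projectives $e_j \L_\eta$ and $e_j \L$. The relations (3) and (4) in the definition of $I_\eta$ are automatic consequences of the relations of $\L$, since $\rho_1 \in \operatorname{soc}(\L)$; they are listed explicitly in $\L_\eta$ precisely to force $\rho_1$ to remain socular after the perturbation of $f^2_{1,1}$. For any vertex $j \neq 1$, the projective $e_j \L_\eta$ therefore agrees with $e_j \L$. For $j = 1$, the two diagrams displayed just above the theorem show bases of size $2p + 2q + 4$ for every value of the parameter $t$, so the totals agree.

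Next I would establish non-triviality. The defining modification $f^2_{1,1} - t\rho_1 = 0$ is equivalent to $(1-t)\rho_1 = \lambda \beta_{2^0}\cdots\alpha_{(s+2,q)}$, so for $t \neq 1$ one can rescale and obtain $\L_\eta \cong \L(p,q,k,s,\lambda/(1-t))$, which is self-injective. For $t=1$, however, the relation forces $\beta_{2^0}\cdots\alpha_{(s+2,q)} = 0$, and the right branch of $e_1 \L_\eta$ terminates at vertex $(s+2,q)$ rather than at $s+1$. Consequently $\beta_{2^0}\cdots\alpha_{(s+2,q-1)}$ is annihilated on the right by every arrow, hence becomes a new socular element at $(s+2,q)$, and together with $\rho_1$ gives $\dim \operatorname{soc}(e_1 \L_\eta) \geq 2$. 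A basic self-injective algebra has simple socle on each indecomposable projective, so $\L_\eta|_{t=1}$ cannot be self-injective, and in particular cannot be isomorphic to $\L$.

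For socle equivalence I would define a $K$-algebra homomorphism $\phi: \L \to \L_\eta/\operatorname{soc}(\L_\eta)$ sending each vertex and arrow to its residue class. The only relations of $\L$ not visibly preserved are those obtained by modifying $f^2_{1,1}$, but in $\L_\eta/\operatorname{soc}(\L_\eta)$ the class of $\rho_1$ vanishes, so $f^2_{1,1} \equiv f^2_{1,1} - t\rho_1 \equiv 0$ there. Hence $\phi$ factors through $\operatorname{soc}(\L)$ and induces a surjective homomorphism $\L/\operatorname{soc}(\L) \to \L_\eta/\operatorname{soc}(\L_\eta)$; a dimension count on each $e_j$-summand then forces it to be an isomorphism. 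The main obstacle I expect is the careful socle analysis at $t=1$: in $\L$ the bottom of $e_1 \L$ is spanned by the single socular element $\rho_1 = \lambda^{-1}\beta_{2^0}\cdots\alpha_{(s+2,q)}$, whereas in $\L_\eta|_{t=1}$ these two presentations become decoupled ($\rho_1$ remains while $\beta_{2^0}\cdots\alpha_{(s+2,q)}$ vanishes); one must verify that no further spurious socular elements are introduced elsewhere, so that after passage to the socle quotient the two algebras really coincide.
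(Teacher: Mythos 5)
Your first two steps are essentially the paper's own argument: the paper proves exactly the dimension equality (via the two displayed diagrams for $e_1\L$ and $e_1\L_\eta$, giving $2p+2q+4$ for every $t$) and then asserts non-triviality because $\L_\eta$ at $t=1$ is not self-injective. Your explicit production of a second socle element $u=\beta_{2^0}\cdots\beta_{2^p}\alpha_{(s+2,0)}\cdots\alpha_{(s+2,q-1)}$ in $e_1\L_\eta$ at $t=1$, and the rescaling isomorphism $\L_\eta\cong\L(p,q,k,s,\lambda/(1-t))$ for $t\neq 1$, are correct and usefully fill in details the paper leaves unstated.

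The genuine problem is your third step, and it is already contradicted by your second. At $t=1$ you have shown that $\operatorname{soc}(e_1\L_\eta)$ contains the two linearly independent elements $\rho_1$ and $u$, while $e_1\L$ has simple socle (spanned by $\rho_1=\lambda^{-1}\beta_{2^0}\cdots\alpha_{(s+2,q)}$) because $\L$ is self-injective; every other indecomposable projective contributes at least one socle element to each algebra. Since $\dim\L_\eta=\dim\L$, this gives $\dim\bigl(\L_\eta/\operatorname{soc}\L_\eta\bigr)\leq\dim\bigl(\L/\operatorname{soc}\L\bigr)-1$ at $t=1$, so the ``dimension count on each $e_j$-summand'' cannot force your surjection $\L/\operatorname{soc}\L\to\L_\eta/\operatorname{soc}\L_\eta$ to be injective; indeed its kernel contains the nonzero class of $\beta_{2^0}\cdots\beta_{2^p}\alpha_{(s+2,0)}\cdots\alpha_{(s+2,q-1)}$, which is not socular in $\L$ (right multiplication by $\alpha_{(s+2,q)}$ gives $\lambda^{-1}\rho_1\neq 0$ there) but maps onto the socular element $u$. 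So your route establishes socle equivalence only for $t\neq 1$ --- where it is immediate anyway from $\L_\eta\cong\L(p,q,k,s,\lambda/(1-t))$, since passing to the socle quotient kills both terms of $f^2_{1,1}$ and hence the parameter --- and it fails precisely at the specialization $t=1$ that you (and the paper) use to get non-triviality. Be aware that the paper itself offers no argument for the ``moreover'' clause, so there is nothing there to align with; but as proposed, your proof of that clause does not go through at $t=1$, and your own socle computation shows that the clause needs either a restriction on $t$ or a different reading of ``socle equivalent'' before any proof can succeed.
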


\bigskip

\section{${\HH}^2(\L)$ for $\L = \Gamma^*(n)$} \label{sec4}
We have given the algebra $\L = \Gamma^*(n)$ by quiver and relations in Section \ref{sec1}. Note that these
relations are not minimal. So we will find a minimal set of relations $f^2$ for this algebra.

Let
$$f^2_{1,1}  = \alpha_1 \alpha_2 - \gamma_1 \gamma_2, \hspace{1cm} f^2_{1,2}= \alpha_1 \alpha_2 - (\beta_1 \beta_2 \cdots \beta_n)^2$$

$$ f^2_{2,1} = \beta_n\alpha_1,  \hspace{1cm}
f^2_{2,2} = \beta_n \gamma_1,$$

$$f^2_{2,3} = \alpha_2 \beta_1,  \hspace{1cm}
f^2_{2,4} = \gamma_2 \beta_1,$$

$$ f^2_{2,5} = \alpha_2 \alpha_1,  \hspace{1cm}
f^2_{2,6} = \gamma_2 \gamma_1,$$

$$f^2_{3,j} = \beta_j \beta_{j+1} \cdots  \beta_n \beta_1 \cdots \beta_n \beta_1 \cdots \beta_{j-1}\beta_j, \mbox{ for } j\in \{2, \ldots, n-1\}.$$

The remaining relation $\beta_n (\beta_1 \beta_2 \cdots \beta^n)^2$ can be written as $f^2_{2,1} \alpha_2 - \beta_n f^2_{1,2}$. So this relation is in $I$ and is not in $f^2$.

\begin{prop}\label{proposition1}
For $\L = \Gamma^*(n)$ and with the above notation, the minimal set of relations is
$f^2 = \{f_{1,1}^2, f_{1,2}^2, f_{2,1}^2, f_{2,2}^2, f_{2,3}^2,
f_{2,4}^2, f_{2,5}^2, f_{2,6}^2, f_{3,j}^2 \mbox{ for } j = 2, \ldots, n -1\}$.
\end{prop}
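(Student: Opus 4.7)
The plan has two parts: to verify that $f^2$ generates $I$, and to verify that $f^2$ is minimal, i.e., its image is a $K$-basis of $I/(\rrad I + I\rrad)$, where $\rrad$ is the arrow ideal of $K\mathcal{Q}$.

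For generation, every relation listed in Section \ref{sec1} either lies in $f^2$ directly or is a straightforward consequence. The six relations in (ii) and the cycle relations $f^2_{3,j}$ for $j \in \{2, \ldots, n-1\}$ are literal members of $f^2$. Among the three equalities in (i), the pair $f^2_{1,1}$ and $f^2_{1,2}$ yields the third via $\gamma_1\gamma_2 - (\beta_1\cdots\beta_n)^2 = f^2_{1,2} - f^2_{1,1}$. The excluded $j = n$ case of (iii), namely $\beta_n(\beta_1\cdots\beta_n)^2$, equals $f^2_{2,1}\alpha_2 - \beta_n f^2_{1,2}$, as already noted in the paragraph above the proposition.

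For minimality, I would partition $f^2$ by the source-target pair $(\mo(f), \mt(f))$ of each uniform element. Inspection of the quiver $\mathcal{Q}(n)$ reveals that every such pair contains a unique element of $f^2$, with the exception of $(1, 1)$, which is occupied by both $f^2_{1,1}$ and $f^2_{1,2}$. Since $I \subseteq \rrad^2$ by admissibility, $\rrad I + I\rrad \subseteq \rrad^3$, and so linear independence modulo $\rrad^3$ suffices. The degree-2 parts of $f^2_{1,1}$ and $f^2_{1,2}$ are $\alpha_1\alpha_2 - \gamma_1\gamma_2$ and $\alpha_1\alpha_2$, which are linearly independent because $\alpha_1\alpha_2$ and $\gamma_1\gamma_2$ are distinct paths. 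Each $f^2_{2,k}$ is a single length-2 path, hence nonzero modulo $\rrad^3$, and the same holds a fortiori for $f^2_{1,1}, f^2_{1,2}$ individually.

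The main obstacle is the set of cycle relations $f^2_{3,j}$, each a single path of length $2n + 1 \geq 3$ and therefore zero modulo $\rrad^3$, so the pure filtration argument breaks down. I would instead work directly in the summand $e_j K\mathcal{Q} e_{j+1}$, where every path from $j$ to $j+1$ either stays inside the $\beta$-cycle (yielding $\beta_j L^k$ with $L = \beta_{j+1}\cdots\beta_j$ the length-$n$ loop at $j+1$) or detours through vertex $1$ via $\alpha_1\alpha_2$ or $\gamma_1\gamma_2$. The detour paths can be rewritten modulo $\rrad I + I\rrad$ using $f^2_{1,1}$ and $f^2_{1,2}$, and a careful length count then shows that the intersection of $\rrad I + I\rrad$ with $e_j K\mathcal{Q} e_{j+1}$ is spanned by $\beta_j L^k$ with $k \geq 3$. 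Hence $f^2_{3,j} = \beta_j L^2$ represents a nonzero class in the quotient, completing minimality.
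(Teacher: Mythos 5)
Your proposal is correct, and it does more than the paper does: the paper's entire justification of this proposition is the generation step you reproduce verbatim (the excluded relation $\beta_n(\beta_1\cdots\beta_n)^2=f^2_{2,1}\alpha_2-\beta_nf^2_{1,2}$, plus the third equality of (i) following from $f^2_{1,1},f^2_{1,2}$), while minimality is asserted without argument. Your second half supplies exactly the missing verification -- that the images of $f^2$ form a basis of $I/(\rrad I+I\rrad)$, which is what legitimises the use of $Q^2$ in the later computations -- via the splitting by source--target pairs, the $\rrad^3$ filtration for the length-two relations and for the pair at $(1,1)$, and a separate analysis for the cycle relations. One intermediate claim should be restated: $e_j(\rrad I+I\rrad)e_{j+1}$ is not literally spanned by the paths $\beta_jL^k$ with $k\geq 3$; it also contains elements involving $\alpha$'s and $\gamma$'s, for instance $\beta_j\cdots\beta_n(\alpha_1\alpha_2-\gamma_1\gamma_2)\beta_1\cdots\beta_j$. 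What your length count actually gives, and all you need, is that in the path-basis expansion of any element of $e_j(\rrad I+I\rrad)e_{j+1}$ the paths consisting solely of $\beta$'s have length at least $3n+1$: products $p\,f^2_{3,i}\,q$ yield pure $\beta$-paths only for $\beta$-paths $p,q$ with $\operatorname{length}(p)+\operatorname{length}(q)$ a positive multiple of $n$, products $p\,f^2_{1,2}\,q$ contribute $p(\beta_1\cdots\beta_n)^2q$ of length at least $3n+1$, and the other generators contribute no pure $\beta$-paths. Since $f^2_{3,j}=\beta_jL^2$ has length $2n+1$, its class is nonzero, as you conclude; with that rewording your argument is complete.
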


Recall that the projective $Q^3 = \bigoplus_{y \in f^3} \L \mo(y) \otimes
\mt(y) \L$. Thus we have $Q^3 = (\L e_1 \otimes e_2\L) \oplus (\L  e_1 \otimes e_3\L) \oplus (\L e_1 \otimes e_{n+1}\L) \oplus (\L e_1 \otimes e_{n+2} \L)
\oplus (\L e_{n+1} \otimes e_1\L) \oplus (\L e_{n+2} \otimes e_1 \L) \oplus (\L
e_{n-1} \otimes e_1\L) \oplus \bigoplus_{m = 2}^{n - 2} (\L e_m \otimes e_{m+2}\L).$
(We note that the projective $Q^3$ is also described in \cite{H} although
Happel gives no description of the maps in the $\L, \L$-projective
resolution of $\L$.) Following \cite{GS}, and with the notation introduced
in Section \ref{sec2}, we may choose the set $f^3$ to consist of the
following elements:
$$\{f^3_{1,1}, f^3_{1,2}, f^3_{1,3}, f^3_{1,4}, f^3_{n+1}, f^3_{n+2}, f^3_{n-1},f^3_m\}, \mbox{ with } m \in \{2, \ldots, n - 2\} \mbox{  where }$$
$$\begin{array}{l c l c l l l l}
f^3_{1,1} & = & f^2_{1,1} \beta_1 \\
& = & \alpha_1 f^2_{2,3} - \gamma_1 f^2_{2,4}
 & \in & e_1 K{\mathcal Q} e_2,\\
f^3_{1,2} &=& f^2_{1,2} \beta_1 \beta_2 \\
 &=& \alpha_1 f^2_{2,3}\beta_2 -  \beta_1 f^2_{3,2}
 & \in & e_1 K{\mathcal Q} e_3,\\
f^3_{1,3}& = &f^2_{1,2} \alpha_1 \\
&=& \alpha_1 f^2_{2,5} - \beta_1\beta_2 \cdots \beta_n \beta_1\cdots \beta_{n-1} f^2_{2,1}
 & \in & e_1 K{\mathcal Q} e_{n+1},\\
f^3_{1,4} &=& f^2_{1,2} \gamma_1 - f^2_{1,1} \gamma_1 \\
&=& \gamma_1 f^2_{2,6} - \beta_1\beta_2 \cdots \beta_n \beta_1 \cdots \beta_{n-1} f^2_{2,2}
& \in & e_1 K{\mathcal Q} e_{n+2},\\
f^3_{n+1} &=& f^2_{2,5} \alpha_2 - f^2_{2,3} \beta_2 \cdots \beta_n \beta_1 \cdots \beta_n \\
&=&  \alpha_2 f^2_{1,2}
 &  \in & e_{n+1} K{\mathcal Q} e_1,\\
 \end{array}$$
 $$\begin{array}{l c l c l l l l}
f^3_{n+2} &=& f^2_{2,4} \beta_2 \cdots \beta_n \beta_1 \cdots \beta_n - f^2_{2,6} \gamma_2 \\
&=& \gamma_2 f^2_{1,1} - \gamma_2 f^2_{1,2}
 &  \in & e_{n+2}K{\mathcal Q} e_1,\\
 f^3_{n-1} &=& f^2_{3,n-1} \beta_n \\
&=& \beta_{n-1} f^2_{2,1} \alpha_2 - \beta_{n-1} \beta_n f^2_{1,2}
 &  \in & e_{n-1} K{\mathcal Q} e_1,\\
f^3_m &=& f^2_{3,m} \beta_{m+1} \\
&=& \beta_m f^2_{3,m+1}
 &  \in & e_m K{\mathcal Q} e_{m+2}\\
& & & & \mbox{ for } m \in \{2, \ldots, n - 2\}. \\
\end{array}$$

We know that ${\HH}^2(\L) = {\Ker}\,d_3 / {\Im}\,d_2$. First we will find
${\Im}\,d_2$. Let $f \in {\Hom}(Q^1, \L)$ and so write
$$f(e_1 \otimes_{\alpha_1} e_{n+1}) = c_1 \alpha_1, \hspace{1cm} f(e_{n+1}
\otimes_{\alpha_2} e_1) = c_2 \alpha_2,$$
$$f(e_1 \otimes_{\gamma_1} e_{n+2}) = c_3 \gamma_1, \hspace{1cm} f(e_{n+2}
\otimes_{\gamma_2} e_1) = c_4 \gamma_2,$$
$$f(e_k \otimes_{\beta_k} e_{k+1}) = d_k \beta_k + d'_k \beta_k \cdots \beta_n \beta_1 \cdots \beta_{k-1} \beta_k, \mbox{ for } k
\in \{1, \ldots, n\},$$
where $c_1, c_2, c_3, c_4, d_k,d'_k \in K \mbox{ for } k \in \{1, \ldots, n\}.$

Now we find $fA_2 = d_2f$. We have
$fA_2(e_1 \otimes_{f^2_{1,1}} e_{1}) = f(e_1 \otimes_{\alpha_1} e_{n+1})\alpha_2
+ \alpha_1 f(e_{n+ 1}\otimes_{\alpha_2} e_{1}) - f(e_{1} \otimes_{\gamma_1} e_{n+2})\gamma_2 - \gamma_1 f(e_{n+2} \otimes_{\gamma_2} e_{1}) =
c_{1} \alpha_1 \alpha_2 + c_2 \alpha_1 \alpha_2 - c_{3} \gamma_1\gamma_2 -  c_{4}\gamma_1\gamma_2
=(c_{1} + c_{2} - c_{3}- c_{4})\alpha_1 \alpha_2.$

Also
$fA_2(e_{1} \otimes_{f^2_{1,2}} e_{1}) =  f(e_{1} \otimes_{\alpha_1}
e_{n+1})\alpha_2 + \alpha_1 f(e_{n+1} \otimes_{\alpha_2} e_{1}) - f(e_{1}
\otimes_{\beta_{1}} e_{2}) \beta_2 \cdots \beta_n$ $\beta_1 \cdots \beta_n -
\ldots - \beta_{1} \cdots \beta_{n-1} f(e_{n} \otimes_{\beta_n} e_{1}) \beta_1 \cdots \beta_n
- \beta_1 \cdots \beta_n f(e_1 \otimes_{\beta_1} e_2) \beta_2 \cdots \beta_n
 - \ldots
- \beta_1 \cdots \beta_n \beta_1  \cdots \beta_{n-1} f(e_n \otimes_{\beta_n} e_1)
= c_{1} \alpha_1 \alpha_2 + c_{2} \alpha_1 \alpha_2
-d_1 \beta_1 \cdots \beta_n \beta_1 \cdots \beta_n
- \ldots
- d_n \beta_1 \cdots \beta_n \beta_1 \cdots \beta_n
-d_1 \beta_1 \cdots \beta_n \beta_1 \cdots \beta_n
-\ldots
-d_n \beta_1 \cdots \beta_n \beta_1 \cdots \beta_n
= (c_{1} + c_{2}) \alpha_1 \alpha_2 - (2d_1 + \ldots + 2d_n) (\beta_1 \cdots \beta_n)^2
=(c_1 + c_2 - 2d_1 - \ldots - 2d_n)  \alpha_1 \alpha_2.$

We can show by direct calculation that $fA_2(\mo(f^2_j) \otimes \mathfrak{t}(f^2_j)) = 0$
for all $f^2_j \neq f^2_{1,1}, f^2_{1,2}$.

Thus $fA_2$ is given by
$$fA_2(e_1 \otimes_{f^2_{1,1}} e_{1}) = (c_{1} + c_{2} - c_{3}- c_{4})\alpha_1 \alpha_2 = c' \alpha_1 \alpha_2,$$
$$fA_2(e_{1} \otimes_{f^2_{1,2}} e_{1}) = (c_1 + c_2 - 2d_1 - \ldots - 2d_n) \alpha_1 \alpha_2 = c'' \alpha_1 \alpha_2.$$
So $\dim\,{\Im}\,d_2 = 2$.

\begin{prop} \label{proposition2}
For $\L = \Gamma^*(n)$, we have ${\dim}\,{\Im}\,d_2 = 2.$
\end{prop}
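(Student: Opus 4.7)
The plan is to parametrize all $f \in \Hom(Q^1,\L)$ by scalars, evaluate $d_2 f = fA_2$ on each generator $\mo(x)\otimes\mt(x)$ for $x \in f^2$, and show that exactly two independent scalar combinations survive. First I would set up coordinates on $\Hom(Q^1,\L)$: the bimodule $Q^1 = \bigoplus_a \L\mo(a)\otimes\mt(a)\L$ is parametrized, for each arrow $a$, by the choice of $f(\mo(a)\otimes_a \mt(a)) \in \mo(a)\L\mt(a)$. For $a \in \{\alpha_1,\alpha_2,\gamma_1,\gamma_2\}$ this space is one-dimensional, spanned by $a$ itself (longer candidates such as $\alpha_1\alpha_2\alpha_1$ or $\alpha_2\beta_1$ vanish by the given relations), giving scalars $c_1,\dots,c_4$. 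For $a = \beta_k$ the space $e_k\L e_{k+1}$ is two-dimensional, spanned by $\beta_k$ and the once-around cycle $\beta_k\beta_{k+1}\cdots\beta_n\beta_1\cdots\beta_{k-1}\beta_k$ (nonzero, being a cyclic conjugate of $(\beta_1\cdots\beta_n)^2 = \alpha_1\alpha_2$), giving scalars $d_k,d'_k$.

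Next I would compute $fA_2$ on each generator of $Q^2$ by substituting $f$ at every arrow position, as prescribed by the formula for $A_2$ in Section~\ref{sec2}. On $f^2_{1,1} = \alpha_1\alpha_2 - \gamma_1\gamma_2$ the expansion collapses to $(c_1+c_2-c_3-c_4)\alpha_1\alpha_2$, and on $f^2_{1,2} = \alpha_1\alpha_2 - (\beta_1\cdots\beta_n)^2$ to $(c_1+c_2 - 2(d_1+\cdots+d_n))\alpha_1\alpha_2$; the factor of $2$ arises because each $\beta_k$ occurs twice in $(\beta_1\cdots\beta_n)^2$, and the $d'_k$-contributions would sit in paths of length $3n$ in the $\beta$-cycle, which are zero by combining $f^2_{2,1}$, $f^2_{2,3}$ and the $f^2_{3,j}$. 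For every other relation---each of $f^2_{2,i}$ and $f^2_{3,j}$---the derivative at any arrow position yields a product in which the inserted $f$-output sits adjacent to a zero monomial relation (e.g.\ $\beta_n\alpha_1=0$ or $\alpha_2\beta_1=0$) or produces a $\beta$-cycle word of length exceeding $2n+1$ which is killed by the relations. Hence $fA_2(\mo(x)\otimes\mt(x))=0$ for all $x\notin\{f^2_{1,1},f^2_{1,2}\}$.

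To finish, I would note that the two surviving scalars $c' := c_1+c_2-c_3-c_4$ and $c'' := c_1+c_2-2(d_1+\cdots+d_n)$ are linearly independent functionals on the parameter space: the coordinate $c_3$ appears in $c'$ but not in $c''$, and $d_1$ appears in $c''$ but not in $c'$, so $(c',c'')$ ranges over all of $K^2$ as $f$ varies. Consequently $\Im d_2$ is exactly the two-dimensional subspace of $\Hom(Q^2,\L)$ spanned by the maps $\mo(f^2_{1,1})\otimes\mt(f^2_{1,1})\mapsto\alpha_1\alpha_2$ (else $0$) and $\mo(f^2_{1,2})\otimes\mt(f^2_{1,2})\mapsto\alpha_1\alpha_2$ (else $0$), giving $\dim\Im d_2 = 2$. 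The main obstacle is the bookkeeping in the middle step: verifying that every $d'_k$-term as well as every derivative at an arrow of $f^2_{3,j}$ really does vanish, which comes down to the observation that in $\L$ any $\beta$-cycle path of length $> 2n+1$ is zero and that $\alpha_1\alpha_2$ and $\gamma_1\gamma_2$ lie in $\operatorname{soc}(\L)$.
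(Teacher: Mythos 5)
Your proposal is correct and takes essentially the same route as the paper: parametrize $f\in\Hom(Q^1,\L)$ by scalars $c_1,\dots,c_4,d_k,d'_k$ on the arrows, compute $fA_2$ on each $f^2$-generator to find that only $f^2_{1,1}$ and $f^2_{1,2}$ give nonzero outputs $(c_1+c_2-c_3-c_4)\alpha_1\alpha_2$ and $(c_1+c_2-2d_1-\cdots-2d_n)\alpha_1\alpha_2$, and conclude these two coefficients vary independently, so $\dim\Im d_2=2$. One harmless slip: the length-$(n+1)$ path $\beta_k\cdots\beta_n\beta_1\cdots\beta_k$ is not a cyclic conjugate of the length-$2n$ cycle $(\beta_1\cdots\beta_n)^2$; it is nonzero because it is an initial subpath of the nonzero length-$2n$ cycle at vertex $k$, and in any case the $d'_k$-terms drop out of the computation.
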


Now we determine ${\Ker}\,d_3$. Let $h \in {\Ker}\,d_3$, so  $h \in
{\Hom}(Q^2, \L)$ and $d_3h = 0$. Then $h: Q^2 \rightarrow \L$ is given by

$$h(e_1 \otimes_{f_{1,1}^2} e_1) = c_1 e_1 + c_2 \alpha_1 \alpha_2 + c_ 3 \beta_1 \beta_2 \cdots \beta_n,$$
$$h(e_1 \otimes_{f_{1,2}^2} e_1) = c_4 e_1 + c_5 \alpha_1 \alpha_2 + c_6 \beta_1 \beta_2 \cdots \beta_n,$$
$$h(\mo(f^2_{2,l}) \otimes_{f^2_{2,l}} \mt(f^2_{2,l})) = 0, \mbox{ for } l
\in \{1, \ldots, 4\},$$
$$h(e_{n + 1} \otimes_{f_{2,5}^2} e_{n+1}) = c_7 e_{n+1},$$
$$h(e_{n+2} \otimes_{f_{2,6}^2} e_{n+2}) = c_8 e_{n+2} \mbox{ and }$$
$$h(\mo(f^2_{3,j}) \otimes_{f^2_{3,j}} \mt(f^2_{3,j})) = d_j \beta_j + d'_j \beta_j \beta_{j+1} \cdots \beta_n \beta_1 \cdots \beta_j,
\mbox{ for } j \in \{2, \ldots, n - 1\}$$
for some $c_1, \ldots, c_8, d_j, d'_j \in K$ for $j \in \{2, \ldots, n-1\}.$

Then $hA_3(e_1 \otimes_{f^3_{1,1}} e_{2}) = h(e_1 \otimes_{f^2_{1,1}}
e_1) \beta_1 - \alpha_1 h(e_{n + 1}\otimes_{f^2_{2,3}}
e_{2}) + \gamma_1 h(e_{n+2}
\otimes_{f^2_{2,4}} e_{2}) = (c_1 e_1 + c_2 \alpha_1 \alpha_2 + c_3 \beta_1 \beta_2 \cdots \beta_n)\beta_1 -0 + 0
= c_1 \beta_1 + c_3 \beta_1 \beta_2 \cdots \beta_n \beta_1.$ As $h \in {\Ker}\,d_3$ we have $c_1=0$ and $c_3 = 0.$

$hA_3(e_1 \otimes_{f^3_{1,2}} e_{3}) = h(e_1 \otimes_{f^2_{1,2}}
e_1) \beta_1 \beta_2 - \alpha_1 h(e_{n + 1}\otimes_{f^2_{2,3}}
e_{2}) \beta_2 + \beta_1 h(e_{2}
\otimes_{f^2_{3,2}} e_{3}) = (c_4 e_1 + c_5 \alpha_1 \alpha_2 + c_6 \beta_1 \beta_2 \cdots \beta_n)\beta_1 \beta_2 -0 + \beta_1(d_2 \beta_2 + d'_2 \beta_2 \cdots \beta_n \beta_1 \beta_2)= (c_4 + d_2) \beta_1 \beta_2+ (c_6 + d'_2) \beta_1 \beta_2 \cdots \beta_n \beta_1 \beta_2.$ As $h \in {\Ker}\,d_3$ we have $c_4 + d_2=0$ and $c_6+ d'_2 = 0.$ So $d_2 = -c_4$ and $d'_2 = -c_6.$

Next, $hA_3(e_1 \otimes_{f^3_{1,3}} e_{n+1}) = h(e_1 \otimes_{f^2_{1,2}}
e_1) \alpha_1 - \alpha_1 h(e_{n + 1}\otimes_{f^2_{2,5}}
e_{n+1}) + \beta_1 \beta_2 \cdots \beta_n \beta_1 \cdots$ $ \beta_{n-1} h(e_{n}
\otimes_{f^2_{2,1}} e_{n+1}) = (c_4 e_1 + c_5 \alpha_1 \alpha_2 + c_6 \beta_1 \beta_2 \cdots \beta_n)\alpha_1 - c_7 \alpha_1 +0
= (c_4 -c_7)  \alpha_1.$ So we have $c_4 - c_7 = 0$ and hence $c_7 = c_4.$

$hA_3(e_1 \otimes_{f^3_{1,4}} e_{n+2}) = h(e_1 \otimes_{f^2_{1,2}}
e_1) \gamma_1 -  h(e_{1}\otimes_{f^2_{1,1}}
e_{1}) \gamma_1 - \gamma_1 h(e_{n+2}\otimes_{f^2_{2,6}} e_{n+2}) + \beta_1 \beta_2 \cdots \beta_n\beta_1  \cdots \beta_{n-1} h(e_{n} \otimes_{f^2_{2,2}} e_{n+2})
= (c_4 e_1 + c_5 \alpha_1 \alpha_2 + c_6 \beta_1 \beta_2 \cdots \beta_n)\gamma_1 - (c_1 e_1 + c_2 \alpha_1 \alpha_2 + c_3 \beta_1 \beta_2 \cdots \beta_n)\gamma_1 -c_8 \gamma_1 + 0 = (c_4 - c_1 - c_8) \gamma_1.$ Therefore $c_8 = c_4$ as $c_1 = 0.$

$hA_3(e_{n+1} \otimes_{f^3_{n+1}} e_{1}) = h(e_{n+1} \otimes_{f^2_{2,5}}
e_{n+1}) \alpha_2 - h(e_{n + 1}\otimes_{f^2_{2,3}} e_{2}) \beta_2 \cdots \beta_n \beta_1 \cdots \beta_n -
\alpha_2 h(e_{1} \otimes_{f^2_{1,2}} e_{1})
= c_7 \alpha_2  - 0 - \alpha_2 (c_4 e_1 + c_5 \alpha_1 \alpha_2 + c_6 \beta_1 \beta_2 \cdots \beta_n)=(c_7 - c_4) \alpha_2.$ Thus again we have $c_7 = c_4.$

$hA_3(e_{n+2} \otimes_{f^3_{n+2}} e_{1}) =  h(e_{n + 2}\otimes_{f^2_{2,4}} e_{2}) \beta_2 \cdots \beta_n \beta_1 \cdots \beta_n
- h(e_{n+2} \otimes_{f^2_{2,6}} e_{n+2}) \gamma_2 - \gamma_2 h(e_{1} \otimes_{f^2_{1,1}} e_{1}) + \gamma_2 h(e_1 \otimes_{f^2_{1,2}} e_1)
= 0 - c_8 \gamma_2 - \gamma_2 (c_1 e_1 + c_2 \alpha_1 \alpha_2 + c_3 \beta_1 \beta_2 \cdots \beta_n) + \gamma_2 (c_4 e_1 + c_5 \alpha_1 \alpha_2 + c_6 \beta_1 \beta_2 \cdots \beta_n) = (-c_8 - c_1 + c_4) \gamma_2.$ As $c_1 = 0$ above, we have $c_8 = c_4$ as we already know.

Also $hA_3(e_{n-1} \otimes_{f^3_{n-1}} e_{1}) = h(e_{n-1} \otimes_{f^2_{3,n-1}} e_{n}) \beta_n - \beta_{n-1} h(e_{n} \otimes_{f^2_{2,1}} e_{n+1}) \alpha_2 + \beta_{n-1} \beta_n h(e_1 \otimes_{f^2_{1,2}} e_1)
= (d_{n-1} \beta_{n-1} + d'_{n-1} \beta_{n-1} \beta_n \beta_1 \cdots \beta_{n-1}) \beta_n + \beta_{n-1} \beta_n (c_4 e_1 + c_5 \alpha_1 \alpha_2 + c_6 \beta_1 \beta_2 \cdots \beta_n)
= d_{n-1} \beta_{n-1} \beta_n + d'_{n-1} \beta_{n-1} \beta_n \beta_1 \cdots \beta_{n-1} \beta_n + c_4 \beta_{n-1} \beta_n + c_6 \beta_{n-1} \beta_n \beta_1 \beta_2 \cdots \beta_n
=(d_{n-1} + c_4) \beta_{n-1} \beta_n + (d'_{n-1} + c_6) \beta_{n-1} \beta_n \beta_1 \cdots \beta_{n-1} \beta_n.$ So we have $d_{n-1} = -c_4$ and $d'_{n-1} = -c_6.$

Finally, for $2 \leq m \leq n-2$, we have $hA_3(e_m \otimes_{f^3_{m}} e_{m+2}) = h(e_m \otimes_{f^2_{3,m}} e_{m+1}) \beta_{m+1} - \beta_m h(e_{m+1} \otimes_{f^2_{3,m+1}} e_{m+2})$
$=(d_m \beta_m + d'_m \beta_m \beta_{m+1} \cdots \beta_n \beta_1 \cdots \beta_m) \beta_{m+1} - \beta_m (d_{m+1}$ $\beta_{m+1} + d'_{m+1} \beta_{m+1} \beta_{m+2} \cdots \beta_n \beta_1 \cdots \beta_{m+1})$
$= (d_m - d_{m+1}) \beta_m \beta_{m+1} + (d'_m - d'_{m+1}) \beta_m$ $\beta_{m+1} \cdots \beta_n \beta_1 \cdots \beta_m \beta_{m+1}.$ Therefore we have $d_m = d_{m+1}$ and $d'_m = d'_{m+1}$. Hence $d_m = -c_4$ and $d'_m = -c_6$ for $m \in \{2, \ldots, n-1\}$ as we have above $d_2 = d_{n-1} = -c_4$ and $d'_2 = d'_{n-1} = -c_6.$

Thus $h$ is given by
$$h(e_1 \otimes_{f_{1,1}^2} e_1) = c_2 \alpha_1 \alpha_2,$$
$$h(e_1 \otimes_{f_{1,2}^2} e_1) = c_4 e_1 + c_5 \alpha_1 \alpha_2 + c_6 \beta_1 \beta_2 \cdots \beta_n,$$
$$h(\mo(f^2_{2,l}) \otimes_{f^2_{2,l}} \mt(f^2_{2,l})) = 0, \mbox{ for } l
\in \{1, \ldots, 4\},$$
$$h(e_{n + 1} \otimes_{f_{2,5}^2} e_{n+1}) = c_4 e_{n+1},$$
$$h(e_{n+2} \otimes_{f_{2,6}^2} e_{n+2}) = c_4 e_{n+2} \mbox{ and }$$
$$h(\mo(f^2_{3,j}) \otimes_{f^2_{3,j}} \mt(f^2_{3,j})) = -c_4 \beta_j -c_6 \beta_j \beta_{j+1} \cdots \beta_n \beta_1 \cdots \beta_j,
\mbox{ for } j \in \{2, \ldots, n - 1\}$$
for some $c_2, c_4, c_5, c_6 \in K.$

\begin{prop} \label{proposition3}
For $\L = \Gamma^*(n)$, we have ${\dim}\,{\Ker}\,d_3 = 4.$
\end{prop}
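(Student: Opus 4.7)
The plan is to mimic the structure of the earlier argument for $\L(p,q,k,s,\lambda)$: first identify the general shape of an element $h \in \Hom(Q^2,\L)$, then compute $hA_3$ on each generator of $Q^3$, and finally read off the linear conditions on the coefficients.

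The first step is to write down the most general $h$. For each $f^2_u$ I need to describe $\mo(f^2_u)\L \mt(f^2_u)$. Using that $e_1\L e_1$ is spanned by $e_1$, $\alpha_1\alpha_2$ (which equals $\gamma_1\gamma_2 = (\beta_1\cdots\beta_n)^2$ modulo $I$), and $\beta_1\cdots\beta_n$, I obtain the form of $h(e_1\otimes_{f^2_{1,1}}e_1)$ and $h(e_1\otimes_{f^2_{1,2}}e_1)$ with coefficients $c_1,c_2,c_3$ and $c_4,c_5,c_6$ respectively. The spaces $\mo(f^2_{2,l})\L\mt(f^2_{2,l})$ are zero (so $h$ vanishes there), $e_{n+1}\L e_{n+1}$ is spanned by $e_{n+1}$ and $e_{n+2}\L e_{n+2}$ by $e_{n+2}$ (giving coefficients $c_7$ and $c_8$), and $\mo(f^2_{3,j})\L\mt(f^2_{3,j}) = e_j\L e_{j+1}$ is spanned by $\beta_j$ and $\beta_j\beta_{j+1}\cdots\beta_n\beta_1\cdots\beta_j$ (giving $d_j,d_j'$ for $j\in\{2,\ldots,n-1\}$).

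The second step is to apply $d_3$. For each of the eight types of generators in $f^3$ listed in the excerpt ($f^3_{1,1}, f^3_{1,2}, f^3_{1,3}, f^3_{1,4}, f^3_{n+1}, f^3_{n+2}, f^3_{n-1}, f^3_m$), I use the description of $A_3$ coming from the two expressions $y = \sum f^2_u p_u = \sum q_u f^2_u r_u$ to write $hA_3(\mo(y)\otimes\mt(y))$ as a sum of terms of the form $h(\mo(y)\otimes_{f^2_u} p_u) - q_u h(\mo(f^2_u)\otimes_{f^2_u}\mt(f^2_u)) r_u$, and then simplify modulo $I$. The resulting relations are: $c_1=0$, $c_3=0$ from $f^3_{1,1}$; $d_2=-c_4$, $d_2'=-c_6$ from $f^3_{1,2}$; $c_7=c_4$ from $f^3_{1,3}$; $c_8=c_4$ from $f^3_{1,4}$ (using $c_1=0$); consistency from $f^3_{n+1}$ and $f^3_{n+2}$; $d_{n-1}=-c_4$, $d_{n-1}'=-c_6$ from $f^3_{n-1}$; and $d_m = d_{m+1}$, $d_m' = d_{m+1}'$ for $2\leq m\leq n-2$ from $f^3_m$.

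The third step is bookkeeping. The chain $d_2 = d_3 = \cdots = d_{n-1}$ coming from $f^3_m$ forces all $d_m$ to equal $-c_4$, and similarly all $d_m' = -c_6$; crucially, the values at the two endpoints agree, so the system is consistent and does not force $c_4 = c_6 = 0$. After eliminating the constrained variables, the free parameters are exactly $c_2, c_4, c_5, c_6$, which gives $\dim\,\Ker\,d_3 = 4$.

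The main obstacle I anticipate is careful verification that no cross-relation between the two big cycles $\alpha_1\alpha_2$ and $(\beta_1\cdots\beta_n)^2$ in $\L$ introduces a hidden constraint—i.e., that the apparent freedom of $c_2$ and $c_5$ is genuine. This is guaranteed by the identification $\alpha_1\alpha_2 = (\beta_1\cdots\beta_n)^2 = \gamma_1\gamma_2$ holding in $\L$ but the elements $f^2_{1,1}$ and $f^2_{1,2}$ being distinct generators of $I$, so coefficients of $\alpha_1\alpha_2$ in $h(e_1\otimes_{f^2_{1,1}}e_1)$ and $h(e_1\otimes_{f^2_{1,2}}e_1)$ are independent. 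Once this is confirmed, the dimension count $4$ follows.
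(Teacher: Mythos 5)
Your proposal is correct and follows essentially the same route as the paper: the same parametrisation of $h\in\Hom(Q^2,\L)$ by $c_1,\ldots,c_8,d_j,d'_j$, the same evaluation of $hA_3$ on each generator of $f^3$, and the same resulting constraints leaving exactly the free parameters $c_2,c_4,c_5,c_6$, hence $\dim\Ker d_3=4$.
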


Therefore  $\dim\,{\HH}^2(\L) = \dim\,{\Ker}\,d_3 - \dim\,{\Im}\,d_2 = 4 - 2= 2$ and a basis is given by the maps $\eta_1$ and $\eta_2$ where
$\eta_1$ is given by
$$\begin{array}{rcl}
e_1 \otimes_{f_{1,2}^2} e_1 & \mapsto & e_1,\\
e_{n+1} \otimes_{f^2_{2,5}} e_{n+1} & \mapsto & e_{n+1},\\
e_{n+2} \otimes_{f^2_{2,6}} e_{n+2} & \mapsto & e_{n+2},\\
\mo(f^2_{3,j}) \otimes_{f^2_{3,j}} \mt(f^2_{3,j}) & \mapsto & - \beta_j,
\mbox{ for } j \in \{2, \ldots, n - 1\},\\
\mbox{else } & \mapsto & 0,
\end{array}$$

$\eta_2$ is given by
$$\begin{array}{rcl}
e_1 \otimes_{f_{1,2}^2} e_1 & \mapsto & \beta_1 \beta_2 \cdots \beta_n,\\
\mo(f^2_{3,j}) \otimes_{f^2_{3,j}} \mt(f^2_{3,j}) & \mapsto & - \beta_j \beta_{j+1} \cdots \beta_n \beta_1 \cdots \beta_j,
\mbox{ for } j \in \{2, \ldots, n - 1\},\\
\mbox{else } & \mapsto & 0.
\end{array}$$

From Proposition \ref{proposition2} and Proposition \ref{proposition3} we get the main result of this section.
\begin{thm}\label{theorem1}
For $\L = \Gamma^*(n)$ with $n \geq 1$ we have $\dim\,{\HH}^2(\L) = 2.$
\end{thm}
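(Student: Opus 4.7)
The plan is to combine the two preceding propositions directly. By definition, $\HH^2(\L) = \Ker\,d_3/\Im\,d_2$, and because $(Q^\bullet,A_\bullet)$ is a complex (so $A_2 A_3 = 0$, hence $d_3 d_2 = 0$), we automatically have $\Im\,d_2 \subseteq \Ker\,d_3$ and the quotient makes sense with dimension equal to the difference of the two. From Proposition~\ref{proposition2} we have $\dim\,\Im\,d_2 = 2$, while Proposition~\ref{proposition3} gives $\dim\,\Ker\,d_3 = 4$. Therefore
$$\dim\,\HH^2(\L) = \dim\,\Ker\,d_3 - \dim\,\Im\,d_2 = 4 - 2 = 2,$$
which is the claim.

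Alternatively, one could give a more hands-on proof by exhibiting an explicit basis, namely the two maps $\eta_1, \eta_2 \in \Hom(Q^2,\L)$ displayed immediately before the theorem statement. Verifying that $\eta_1, \eta_2$ lie in $\Ker\,d_3$ is a direct computation on each of the generators $f^3_{1,1}, f^3_{1,2}, f^3_{1,3}, f^3_{1,4}, f^3_{n+1}, f^3_{n+2}, f^3_{n-1}$ and $f^3_m$ (for $2 \leq m \leq n-2$) of $Q^3$ described earlier in the section, using that both $\eta_i$ vanish on all $f^2_{2,l}$ and matching up the two contributions on the remaining summands. Linear independence modulo $\Im\,d_2$ follows because every element of $\Im\,d_2$ lands in $K \cdot \alpha_1 \alpha_2$ on the summands indexed by $f^2_{1,1}$ and $f^2_{1,2}$ and is zero elsewhere, whereas $\eta_1$ has a nonzero $e_1$-component (respectively $e_{n+1}$, $e_{n+2}$, and $\beta_j$-components) and $\eta_2$ has a $\beta_1\beta_2\cdots\beta_n$-component that cannot be produced by any $fA_2$.

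The substantive content of the theorem is entirely carried by Propositions~\ref{proposition2} and~\ref{proposition3}; the step that looks hardest in retrospect is not this final bookkeeping step but rather the enumeration performed in Proposition~\ref{proposition3}, where the system of equations coming from $hA_3 = 0$ evaluated on each $f^3_\bullet$ must be solved to show that the $2n+4$ initial parameters collapse to exactly four free ones ($c_2, c_4, c_5, c_6$). Since that work is already in place, the proof of Theorem~\ref{theorem1} itself reduces to the one-line subtraction above.
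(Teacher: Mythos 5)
Your proposal is correct and follows the paper's own route exactly: the paper deduces the theorem by the same subtraction $\dim\,\Ker\,d_3 - \dim\,\Im\,d_2 = 4 - 2 = 2$ from Propositions \ref{proposition2} and \ref{proposition3}, with the basis $\{\eta_1,\eta_2\}$ recorded afterwards just as you describe. No gaps.
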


To connect this with deformations we use a similar discussion as Section \ref{sec3}. We introduce the parameter $t$ and define the algebra $\L_{\eta_2}$ to be the algebra $K{\mathcal Q}/I_{\eta_2}$
where $I_{\eta_2}$ is the ideal generated by the following elements:\\
(1) $f^2_{1,1},\\
(2) f^2_{1,2} - t \beta_1 \beta_2 \cdots \beta_n,\\
(3) f^2_{2,1}, f^2_{2,2}, f^2_{2,3}, f^2_{2,4}, f^2_{2,5}, f^2_{2,6},\\
(4) f^2_{3,j} + t \beta_j \beta_{j+1} \cdots  \beta_n \beta_1 \cdots \beta_{j-1}\beta_j, \mbox{ for } j\in \{2, \ldots, n-1\}.$

We can show that ${\dim}\,\L_{\eta_2} \neq {\dim}\,\L$. Hence this algebra has no non-trivial deformation.

From Theorem \ref{thm2} and Theorem \ref{theorem1} we have now found ${\HH}^2(\L)$ for all standard one-parametric but not weakly symmetric self-injective algebras of tame representation type.

\section*{Acknowledgements}
I thank Prof. Nicole Snashall for her encouragement and helpful comments.

\end{document}